\newtheorem{theorem}{\bf Theorem}[section]
\newtheorem{lemma}{\bf Lemma}[section]
\newcommand{\beq}{\begin{equation}}
\newcommand{\eeq}{\end{equation}}
\newcommand{\beqn}{\begin{eqnarray}}
\newcommand{\eeqn}{\end{eqnarray}}
\newcommand{\bear}{\begin{array}}
\newcommand{\eear}{\end{array}}
\newcommand{\beit}{\begin{itemize}}
\newcommand{\eeit}{\end{itemize}}
\newcommand{\beqno}{\begin{eqnarray*}}
\newcommand{\eeqno}{\end{eqnarray*}}
\let\theta\vartheta
\def\eqref #1{(\ref{#1})}
\numberwithin{equation}{section}
\begin{document}

\title{Boundary stabilizing actuators for multi-phase fluids in a channel }
\author{}
\date{}
\maketitle
\begin{center}
\vskip-1cm
{\large\sc Ionu\c t Munteanu}\\
{\normalsize e-mail: {\tt ionut.munteanu@uaic.ro}}\\
[.25cm]

{\small Alexandru Ioan Cuza University, Department of Mathematics, }\\
{\small and Octav Mayer Institute of Mathematics (Romanian Academy)}\\
{\small Carol I no. 11, 700506 Ia\c{s}i, Romania}\\
[.25cm]

\end{center}

\begin{abstract}
This work is devoted to the problem of boundary stabilization of a mixture of two viscous and incompressible fluids in a three dimensional channel-like domain $(x,y,z)\in \mathbb{R}\times (0,1)\times\mathbb{R}$. The model consists of the Navier-Stokes equations, governing the fluid velocity, coupled with a convective Cahn-Hilliard equation for the relative density of the atoms of one of the fluids. The feedback controller, we design here, is with actuation  only on the tangential component of the velocity field on both walls $y=0,1$.  Moreover, it is given in a simple and explicit form, easy to manipulate from the computational point of view. The stability is guaranteed no matter how large the kinematic viscosity of the mixture is. Also it is independent of the other coefficients of the system.

\noindent\textbf{MSC 2010:}93D15, 35K52, 35Q35,35K55, 76D05,93C20

\noindent\textbf{Keywords:} Navier-Stokes, Cahn-Hilliard, incompressible two phase flows in a channel, Poiseuille parabolic profile, stabilizing feedback law, eigenvectors.
\end{abstract}
\section{Introduction of the model and the main results}
In this work, we are concerned with the turbulence issues of binary mixture flows  in an infinite channel under shear. This situation arises, for example, in viscometric experiments. Numerous physical papers (see \cite{chella,doi}, for example) propose to consider, in this case, a coupling of Cahn-Hilliard and Navier-Stokes equations. More exactly, we shall  study the following coupling Cahn-Hilliard-Navier-Stokes system, also known as "model H",   cf. Hohenberg and Halperin \cite{t7} and Gurtin et al. \cite{t6}: 
\begin{flalign}\label{e1}\begin{array}{l}\left\{\begin{array}{l} u_t- \nu\Delta u+uu_x+vu_y+wu_z+p_x=\kappa \mu \varphi_x,\\
v_t- \nu\Delta v+uv_x+vv_y+wv_z+p_y=\kappa \mu \varphi_y,\\
w_t- \nu\Delta w+uw_x+vw_y+ww_z+p_z=\kappa \mu\varphi_z,\\
u_x+v_y+w_z=0,\\
\mu=-\varepsilon\Delta \varphi+\alpha F'(\varphi),\\
\varphi_t+u\varphi_x+v\varphi_y+w\varphi_z-\rho_0\Delta\mu=0,\\
t>0,\ x\in\mathbb{R},\ y\in (0,1),\ z\in\mathbb{R};\end{array}\right.\ \end{array}&&&&\end{flalign}supplemented with the boundary conditions
\begin{flalign}\label{e2}\begin{array}{l}\left\{\begin{array}{l}\text{ $u,v,w,p,\varphi$  $2\pi-$periodic in both $x$ and $z$ directions,}\\
u(t,x,0,z)=\mathbf{S}(t,x,z) \text{ and }u(t,x,1,z)=\mathbf{T}(t,x,z),\\
v(t,x,0,z)=v(t,x,1,z)=w(t,x,0,z)=w(t,x,1,z)=0,\\
\varphi_y(t,x,0,z)=\varphi_y(t,x,1,z)=(\Delta \varphi)_y(t,x,0,z)=(\Delta\varphi)_y(t,x,1,z)=0,\\
t>0,\ x,z\in\mathbb{R};\end{array}\right.\ \end{array}&&&&\end{flalign}and the initial data
\begin{flalign}\label{pe2}\left\{\begin{array}{l}u(0,x,y,z)=u_o(x,y,z),\ v(0,x,y,z)=v_o(x,y,z),\ w(0,x,y,z)=w_o(x,y,z),\\
\varphi(0,x,y,z)=\varphi_o(x,y,z),\ x\in\mathbb{R},\ y\in (0,1),\ z\in\mathbb{R}.\end{array}\right.\ &&&&\end{flalign}

Here $(u,v,w)$ is the velocity field (the tangential, normal and span-wise components, respectively), $p$ is the pressure, $\varphi$ represents the relative density of one species of atoms. $\nu>0$ is the kinematic viscosity of the mixture,  $\rho_0>0,\kappa>0$ are the mobility constant and capillarity (stress), respectively; $\varepsilon$ and $\alpha$ are two positive parameters describing the interactions between the two phases. In particular, $\varepsilon$ is related to the thickness of the interface separating the two fluids. Here $\mu$ is the chemical potential of the binary mixture and $F$ is the double-well potential  
$$F(\varphi)=\frac{1}{4}(\varphi^2-1)^2.$$ Finally, $\mathbf{S}$ and $\mathbf{T}$ are the controls, with actuation  on the tangential component of the velocity field, at the walls $y=0$ and $y=1$ only. The tangential actuation is technologically feasible because of the work on synthetic jets of Glezer \cite{G4}. In \cite{G4} it is shown that a teamed up pair of
synthetic jets can achieve an angle of $85^\circ$ from the normal direction, with the same
momentum as wall-normal actuation. The tangential velocity actuation is generated
using arrays of rotating disks. We emphasize that the  $2\pi-$ periodicity assumption is often assumed, for mathematical convenience, because it does not alter the essential features of the behaviour of the flow mixture (see for details \cite{a7}). (We notice that instead of the $2\pi-$periodicity we could have considered a general $L,l-$periodicity, $L,l>0$, i.e., elongated domains. Anyway, we confined ourselves to $2\pi-$ periodicity case for the ease of Fourier functional settings below, and since, easily seen,  the results obtained here hold true for the general periodic case as-well.)

The system \eqref{e1}-\eqref{pe2} describes the motion of two macroscopically immiscible, viscous, incompressible Newtonian
fluids. The model takes a partial mixing on a small length scale measured by a
parameter $\varepsilon> 0$ into account. Therefore the classical sharp interface between both
fluids is replaced by an interface region and an order parameter related to the
concentration difference of both fluids is introduced. Concerning the existence and uniqueness of solutions for such system we refer to the work \cite{boyer1}. The asymptotic behavior of \eqref{e1} has been previously investigated in many papers, see for instance \cite{boyer1,gal,gal1,gal2,gal3,gal4}, while, to the best of our knowledge, the problem of asymptotic exponential stabilization of such a system is for the first time addressed here.  In \cite[Theorem 2.3]{gal1} it is showed that the stationary solution of \eqref{e1} $(U,0,0,\text{const.})$, where $U$ is the Poiseuille profile below, is  unstable. Hence, even for the very simple case: constant target concentration $\varphi$, the problem of stabilization is not trivial. However, we shall not consider here the case of constant steady-state concentration because of two reasons: firstly, in such  case the problem mainly reduces to the problem of stabilization of the Navier-Stokes flows in a channel, problem which was already solved in e.g. \cite{barbutg, book, raymond, krstic} and many other papers; secondly, constant target concentration  means
either a completely mixed state or a pure state, while  a non-constant
state seems more physically relevant. Moreover,  since the set of stationary states is a
continuum, the convergence of a solution to a single
equilibrium is not a trivial issue. Concerning the stabilization problem associated to the Cahn-Hilliard system we refer to the results in \cite{barbucolli, book}

We notice that, the boundary conditions and the sixth equation in \eqref{e1} assure the mass conservation. Besides this, let us observe that, in the term $\kappa\mu\nabla\varphi$, the part $\alpha F'(\varphi)\nabla\varphi=\nabla (\alpha F(\varphi))$ can be viewed as a pressure term. In this light, here and in what follows we shall consider the equivalent of equation \eqref{e1}
\begin{flalign}\label{e4}\left\{\begin{array}{l} u_t- \nu\Delta u+uu_x+vu_y+wu_z+p_x=-\varepsilon\kappa \Delta\varphi \varphi_x,\\
v_t- \nu\Delta v+uv_x+vv_y+wv_z+p_y=-\varepsilon\kappa \Delta\varphi \varphi_y,\\
w_t- \nu\Delta w+uw_x+vw_y+ww_z+p_z=-\varepsilon\kappa \Delta\varphi\varphi_z,\\
u_x+v_y+w_z=0,\\
\mu=-\varepsilon\Delta \varphi+\alpha F'(\varphi),\\
\varphi_t+u\varphi_x+v\varphi_y+w\varphi_z-\rho_0\Delta\mu=0,\\
t>0,\ x\in\mathbb{R},\ y\in (0,1),\ z\in\mathbb{R}.\end{array}\right.\ &&\end{flalign}

  In order to compute the equilibrium state, in \eqref{e2} we put the boundary controls to be zero, i.e.,
	$$u(t,x,0,z)=u(t,x,1,z)=0,\ \forall x,z\in\mathbb{R}, \ t>0.$$ The stationary system reads as
	\begin{flalign}\label{joi1}\left\{\begin{array}{l} - \nu\Delta u+uu_x+vu_y+wu_z+p_x=-\varepsilon\kappa \Delta\varphi \varphi_x,\\
	- \nu\Delta v+uv_x+vv_y+wv_z+p_y=-\varepsilon\kappa \Delta\varphi \varphi_y,\\
	- \nu\Delta w+uw_x+vw_y+ww_z+p_z=-\varepsilon\kappa \Delta\varphi\varphi_z,\\
	u_x+v_y+w_z=0,\\
	\mu=-\varepsilon\Delta \varphi+\alpha F'(\varphi),\\
	u\varphi_x+v\varphi_y+w\varphi_z-\rho_0\Delta\mu=0,\\
	x\in\mathbb{R},\ y\in (0,1),\ z\in\mathbb{R},\end{array}\right.\ &&\end{flalign} and B.C.
	\begin{flalign}\begin{array}{l}\left\{\begin{array}{l}\text{ $u,v,w,p,\varphi$  $2\pi-$periodic in both $x$ and $z$ directions,}\\
	u(x,0,z)=0 \text{ and }u(x,1,z)=0,\\
	v(x,0,z)=v(x,1,z)=w(x,0,z)=w(x,1,z)=0,\\
	\varphi_y(x,0,z)=\varphi_y(x,1,z)=(\Delta \varphi)_y(x,0,z)=(\Delta\varphi)_y(x,1,z)=0,\\
	\ x,z\in\mathbb{R};\end{array}\right.\ \end{array}&&&&\end{flalign}In order to solve it, we consider a steady-state flow with zero wall-normal and span-wise velocity, i.e., $(U,0,0)$. Namely, we consider the classical Poiseuille profile
	\begin{flalign}\label{e10001}U(y)=-C_U(y^2-y),\ y\in(0,1) ,\end{flalign}for some $C_U>0$. The target concentration $\varphi_{tg}$, will be assumed as
	$$\varphi_{tg}=\varphi_{tg}(y).$$ This is  related to the fact that there is no turbulence in the stationary system, i.e., at each level $(x,z)$, the concentration is the same, which means that there is no vortex (no turbulence). Substituting this in the corresponding stationary sixth equation of \eqref{joi1}, we deduce that the target steady-state concentration $\varphi_{tg}$ satisfies
	$$\Delta\left[-\varepsilon\Delta\varphi_{tg}+\alpha (\varphi^3_{tg}-\varphi_{tg})\right]=0, \ y\in(0,1),$$
	$$(\Delta\varphi_{tg})_{y}(0)=(\Delta\varphi_{tg})_{y}(1)=0.$$Whence
	\begin{equation}\label{e130}\left\{\begin{array}{l}-\varepsilon\Delta\varphi_{tg}+\alpha(\varphi_{tg}^3-\varphi_{tg})=0 \text{ a.e. in }(0,1)\\
	(\varphi_{tg})_y(0)=(\varphi_{tg})_y(1)=0.\end{array}\right.\ \end{equation}Equation \eqref{e130}  has a solution $\varphi_{tg}\in H^1(0,1)$ which is, in fact, the minimizer of the l.s.c. and coercive functional
	$$\Upsilon(\varphi)=\int_0^1\left(\frac{\varepsilon}{2}|\nabla\varphi|^2+\alpha\frac{(\varphi^2-1)^2}{4}\right)dy.$$
	(For further details, see \cite[Lemma A1]{barbucolli}.) 
	
	For a function $f:[0,1]\rightarrow \mathbb{C}$, we denote by $\hat{f}:[0,1]\rightarrow\mathbb{C}$:
	$$\hat{f}(y):=f(1-y),\ y\in[0,1].$$We say that $f$ is symmetric if $f\equiv\hat{f}$, and antisymmetric if $f\equiv-\hat{f}$. We assume that, except some small regions, the target concentration is antisymmetric.  We set
	\begin{equation}\label{e159} \varphi_\infty(y):=\frac{1}{2}(\varphi_{tg}(y)-\varphi_{tg}(1-y)),\ y\in (0,1).\end{equation} $\varphi_\infty$ can be seen as the antisymmetric part of $\varphi_{tg}$, since $$\varphi_\infty(y)=-\varphi_\infty(1-y),\ y\in(0,1),$$and
	$$\varphi_{tg}=\varphi_\infty+\frac{\varphi_{tg}(y)+\varphi_{tg}(1-y)}{2}.$$
	
	Besides this, we shall assume that
			\begin{equation}\label{e135}(H_{0}) \ \ \ \ \ \ \ \  3\int_0^1\varphi_{tg}^2(y)dy-1\geq0. \ \ \ \ \ \ \end{equation}
	
	An example of $\varphi_{tg}$ is: $\varphi_{tg}(y)=-1,\ y\in(0,1/2-\varepsilon),\ \varphi_{tg}(y)=1,\ y\in (1/2+\varepsilon)$ and, on $(1/2-\varepsilon,1/2+\varepsilon)$, $\varphi_{tg}$ satisfies
	\begin{equation}\left\{\begin{array}{l}-\varepsilon\Delta\varphi_{tg}+\alpha(\varphi_{tg}^3-\varphi_{tg})=0 \text{ a.e. in }(1/2-\varepsilon,1/2+\varepsilon)\\
	(\varphi_{tg})_y(1/2-\varepsilon)=(\varphi_{tg})_y(1/2+\varepsilon)=0.\end{array}\right.\ \end{equation} Note that, in this case $(H_0)$ is full-filled.
	
	Next, defining the fluctuation variables
	$$u:=u-U,\ v:=v,\ w:=w,\ \varphi:=\varphi-\varphi_{tg},$$ we equivalently rewrite \eqref{e4} as
	\begin{flalign}\label{e6}\left\{\begin{array}{l}u_t- \nu\Delta u+(u+U)u_x+v(u+U)_y+wu_z+p_x=-\varepsilon\kappa[\Delta \varphi_{tg}\varphi_x+\Delta\varphi\varphi_x],\\
v_t- \nu\Delta v+(u+U)v_x+vv_y+wv_z+p_y=-\varepsilon\kappa[\Delta\varphi(\varphi_{tg})_y+\Delta \varphi_{tg}\varphi_y+\Delta\varphi\varphi_y],\\
w_t-\nu \Delta w+(u+U)w_x+vw_y+ww_z+p_z=-\varepsilon\kappa[\Delta \varphi_{tg}\varphi_z+\Delta\varphi\varphi_z],\\
u_x+v_y+w_z=0,\\
\begin{aligned}\varphi_t+&(u+U)\varphi_x+  v(\varphi+\varphi_{tg})_y+w\varphi_z\\&
-\rho_0\Delta\left\{-\varepsilon\Delta\varphi+\alpha [\varphi^3+3\varphi^2\varphi_{tg}+F''(\varphi_{tg})\varphi)]\right\}=0,\end{aligned}\\
t>0,\ x\in\mathbb{R},\ y\in(0,1),\ z\in\mathbb{R},\end{array}\right.\ &&\end{flalign}supplemented with the B.C. and initial data
\begin{flalign}\label{e7}\left\{\begin{array}{l}\text{ $u,v,w,p,\varphi$  $2\pi-$periodic in both $x$ and $z$ directions,}\\
u(t,x,0,z)=\mathbf{S}(t,x,z) \text{ and }u(t,x,1,z)=\mathbf{T}(t,x,z),\\
v(t,x,0,z)=v(t,x,1,z)=w(t,x,0,z)=w(t,x,1,z)=0,\\
\varphi_y(t,x,0,z)=\varphi_y(t,x,1,z)=(\Delta \varphi)_y(t,x,0,z)=(\Delta\varphi)_y(t,x,1,z)=0,\\
t>0,\ x,z\in\mathbb{R};\\
u(0)=u^o:=u_o-U,\ v(0)=v^o:=v_o,\ w(0)=w^o:=w_o,\\
 \varphi(0)=\varphi^o:=\varphi_o-\varphi_{tg},\ x\in\mathbb{R},\ y\in (0,1),\ z\in\mathbb{R}.\end{array}\right.\ &&\end{flalign}

In this work, we study only the stabilization problem associated to the linearized system of \eqref{e6}-\eqref{e7}. For computational reasons, the linearized we shall consider here will not be taken in the classical way. To this purpose, let us introduce the quantity 
\begin{flalign}\label{e8}\begin{aligned} \gamma:=\rho_0\alpha\int_0^1 F''(\varphi_{tg})dy,\end{aligned}\end{flalign}which by \eqref{e135}, we see that
\begin{equation}\label{e44}\gamma\geq0.\end{equation} 
  Hence, the linear system we study is the following
\begin{equation}\label{e9}\left\{\begin{array}{l}u_t- \nu\Delta u+Uu_x+vU_y+p_x=-\varepsilon\kappa\Delta\varphi_\infty \varphi_x,\\
v_t- \nu\Delta v+Uv_x+p_y=-\varepsilon\kappa\Delta\varphi_\infty\varphi_y-\varepsilon\kappa(\varphi_\infty)_y\Delta\varphi,\\
w_t- \nu\Delta w+Uw_x+p_z=-\varepsilon\kappa\Delta\varphi_\infty\varphi_z,\\
u_x+v_y+w_z=0,\\
\varphi_t+U\varphi_x+(\varphi_\infty)_y v+\rho_0\varepsilon\Delta^2\varphi-\gamma\Delta\varphi=0,\\
t>0,\ x\in\mathbb{R},\ y\in(0,1),\ z\in\mathbb{R},\end{array}\right.\ \end{equation}supplemented with the B.C. and initial data \eqref{e7}. Notice that we have replaced the target concentration by its antisymmetric part $\varphi_\infty$. This fact is crucial in the proof of the key Lemma \ref{l2} below. Moreover, in \eqref{e9}, the quantity $\rho_0\alpha F''(\varphi_{tg})$ has been replaced by its mean value $\gamma$. Concerning the linearized system, we shall prove the following
\begin{theorem}\label{t1}Under assumptions $(H_0)$ and $(H_{11}),(H_{22})$ below, once one plugs the feedback controllers 
\begin{equation}\label{e180}\mathbf{S}(t,x,z):=\sum_{\sqrt{k^2+l^2}\leq M,\ k\neq0}\mathbf{s}_{kl}(t)e^{\text{i}kx}e^{\text{i}lz} \text{ and } \mathbf{T}(t,x,z):=\sum_{\sqrt{k^2+l^2}\leq M,\ k\neq0}\mathbf{t}_{kl}(t)e^{\text{i}kx}e^{\text{i}lz},\end{equation}into system \eqref{e9} it yields the existence of some $C,\eta>0$ such that the corresponding solution of the closed-loop system \eqref{e9} satisfies the exponential decay
\begin{equation}\label{e181}\|(u(t),v(t),w(t),\varphi(t))\|^2\leq Ce^{-\eta t}\|(u^o,v^o,w^o,\varphi^o)\|^2,\ \forall t\geq0.\end{equation} Here, $M>0$ is given by Lemmas \ref{l1}, \ref{cl1} below. Moreover, for $\sqrt{k^2+l^2}\leq M,\ k\neq0,l\neq0$, 
\begin{flalign}\label{te60}\mathbf{s}_{kl}(t):=-\frac{i}{k}\Omega_{kl}(t) \text{ and }\mathbf{t}_{kl}=\overline{a}\frac{\text{i}}{k}\Omega_{kl}(t),\end{flalign}
where $a\in\mathbb{C}$ is given by Lemma \ref{l2} below, and  $\Omega_{kl}=\Omega_{kl}(t,v,\varphi)$ has the form
\begin{flalign}\label{te61}\begin{aligned}&\Omega_{kl}(t):=\\&
\left<\Lambda_{sum}^{kl}\mathbf{R}_{kl}\left(\begin{array}{c}\int_\mathcal{O}\left[-v_{yy}+(k^2+l^2)v\right]\overline{Z_1^{kl*}}+\varphi\overline{\Phi_1^{kl*}}]e^{-\text{i}kx}e^{-\text{i}lz}dxdydz\\
\int_\mathcal{O}\left[-v_{yy}+(k^2+l^2)v\right]\overline{Z_2^{kl*}}+\varphi\overline{\Phi_2^{kl*}}]e^{-\text{i}kx}e^{-\text{i}lz}dxdydz\\
\ddots\\
\int_\mathcal{O}\left[-v_{yy}+(k^2+l^2)v\right]\overline{Z_{N_{kl}}^{kl*}}+\varphi\overline{\Phi_{N_{kl}}^{kl*}}]e^{-\text{i}kx}e^{-\text{i}lz}dxdydz\end{array}\right),\left(\begin{array}{c}(Z_1^{kl*})_{yy}(0)+a(Z_1^{kl*})_{yy}(1)\\ (Z_2^{kl*})_{yy}(0)+a(Z_2^{kl*})_{yy}(1)\\ \ddots \\ (Z_{N_{kl}}^{kl*})_{yy}(0)+a(Z_{N_{kl}}^{kl*})_{yy}(1)\end{array}\right)\right>_{N_{kl}}\end{aligned}&& \end{flalign}Here, $\Lambda_{sum}^{kl}$ are the diagonal matrices given by \eqref{e63} below, and $\mathbf{R}_{kl}$ are the square matrices given by \eqref{e66} below. $\mathcal{O}=(0,2\pi)\times(0,1)\times(0,2\pi),$ and $\left\{(Z_j^{kl*},\Phi_j^{kl*})\right\}_{j=1}^{N_{kl}}$ are  the first $N_{kl}\in\mathbb{N}$ eigenvectors of the  dual  of the operator $\mathbf{A}_{kl}$, given by \eqref{e25} below. $\left<\cdot,\cdot\right>_{N}$ stands for the scalar product in $\mathbb{C}^N$.

Furthermore, for $0<|k|\leq M$ \begin{flalign}\label{tce60}\mathbf{s}_{k0}(t):=-\frac{i}{k}\Omega_{k0}(t) \text{ and }\mathbf{t}_{k0}:=\overline{b}\frac{i}{k}\Omega_{k0},\end{flalign}Here $b\in\mathbb{C}$ is given by Lemma \ref{cl2} below, and
 $\Omega_{k0}=\Omega_{k0}(t,v,\varphi)$ has the form
\begin{flalign}\label{tce61}\begin{aligned}&\Omega_{k0}(t):=\\&
\left<\Lambda_{sum}^{k0}\mathbf{R}_{k0}\left(\begin{array}{c}\int_\mathcal{O}\left[-v_{yy}+k^2v\right]\overline{Z_1^{k0*}}+\varphi\overline{\Phi_1^{k0*}}]e^{-\text{i}kx}dxdydz\\
\int_\mathcal{O}\left[-v_{yy}+k^2v\right]\overline{Z_2^{k0*}}+\varphi\overline{\Phi_2^{k0*}}]e^{-\text{i}kx}dxdydz\\
\ddots\\
\int_\mathcal{O}\left[-v_{yy}+k^2v\right]\overline{Z_{N_{k0}}^{k0*}}+\varphi\overline{\Phi_{N_{k0}}^{k0*}}]e^{-\text{i}kx}dxdydz\end{array}\right),\left(\begin{array}{c}(Z_1^{k0*})_{yy}(0)+b(Z_1^{k0*})_{yy}(1)\\ (Z_2^{k0*})_{yy}(0)+b(Z_2^{k0*})_{yy}(1)\\ \ddots \\ (Z_{N_{k0}}^{k0*})_{yy}(0)+b(Z_{N_{k0}}^{k0*})_{yy}(1)\end{array}\right)\right>_{N_{k0}},
\end{aligned}&&\end{flalign}Here, $\Lambda_{sum}^{k0}$ are the diagonal matrices given by \eqref{ce63} below, and $\mathbf{R}_{k0}$ are the square matrices given by \eqref{ce66} below; and $\left\{(Z_j^{k0*},\Phi_j^{k0*})\right\}_{j=1}^{N_{k0}}$ are  the first $N_{k0}\in\mathbb{N}$ eigenvectors of the  dual  of the operator $\mathbf{A}_{k0}$, given by \eqref{ce25} below.
\end{theorem}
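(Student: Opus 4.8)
The plan is to diagonalize the linearized system \eqref{e9} along the Fourier modes in the periodic variables $x,z$, to single out the finitely many wavenumbers carrying instability, and, on each of them, to reduce the boundary stabilization problem to that of a finite-dimensional control system that the explicit feedback \eqref{te60}--\eqref{tce61} is designed to stabilize by pole placement. Writing every unknown as $\sum_{k,l}(\cdot)_{kl}(t,y)e^{\text{i}kx}e^{\text{i}lz}$ and, for fixed $(k,l)$, eliminating the pressure in the usual channel-flow manner (apply the divergence to the momentum equations and use $u_x+v_y+w_z=0$), one is left with a closed system for the pair $([-v_{yy}+(k^2+l^2)v]_{kl},\varphi_{kl})$; the normal-vorticity component $u_z-w_x$ decouples into a Squire-type equation whose $\varphi$-forcing vanishes identically (because $\varphi_\infty=\varphi_\infty(y)$) and which is strictly dissipative, so it is disposed of at the end. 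Since $w=0$ and $u=\mathbf{S},\mathbf{T}$ on the walls, incompressibility forces $v_y=-u_x$ there, so the controls enter the reduced system only as the Neumann data $(v_{kl})_y(t,0),(v_{kl})_y(t,1)$ — this explains the $\mp\text{i}/k$ prefactors in \eqref{te60},\eqref{tce60} and the fact that the $k=0$ modes are, and need be, left unactuated. For $\sqrt{k^2+l^2}>M$ and for all $k=0$ modes I would establish an \emph{a priori} exponential energy estimate with rate $\eta$, using the dissipativity of $-\nu\widehat\Delta$ and $-\rho_0\varepsilon\widehat\Delta^2$ together with $(H_0)$ and the antisymmetry of $\varphi_\infty$ (which renders the zero-wavenumber Cahn--Hilliard/Stokes block non-positive); this fixes $M$ and is the content of Lemmas \ref{l1},\ref{cl1}.

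Now fix a controlled mode $0<|k|\le M$. I would introduce the mode operator $\mathbf{A}_{kl}$ of \eqref{e25} (resp. $\mathbf{A}_{k0}$ of \eqref{ce25}) acting on the state $([-v_{yy}+(k^2+l^2)v],\varphi)$, check that it generates an analytic $C_0$-semigroup with compact resolvent, hence has discrete spectrum with at most finitely many eigenvalues in $\{\text{Re}\,\lambda\ge-\eta\}$, and split the state space as $X_u^{kl}\oplus X_s^{kl}$, where $X_u^{kl}$ is the $N_{kl}$-dimensional span of the generalized eigenvectors with $\text{Re}\,\lambda\ge-\eta$ and the semigroup on $X_s^{kl}$ already decays faster than $e^{-\eta t}$. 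Lifting the Neumann boundary control by a Neumann map and projecting onto $X_u^{kl}$ produces a finite-dimensional system $\dot\zeta=\Lambda^{kl}\zeta+\Omega_{kl}(t)\,\mathbf{d}_{kl}$ whose observation functionals are precisely the bracketed integrals in \eqref{te61},\eqref{tce61} — they are the coefficients of $(v,\varphi)$ along the dual eigenvectors $(Z_j^{kl*},\Phi_j^{kl*})$ — and whose input vector $\mathbf{d}_{kl}$ has $j$-th component $(Z_j^{kl*})_{yy}(0)+a(Z_j^{kl*})_{yy}(1)$ (resp. with $b$ when $l=0$), obtained by applying the Neumann map to the adjoint eigenvectors.

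Given this reduction, the assumptions $(H_{11}),(H_{22})$ — ensuring that the relevant eigenvalues are simple and the eigenvectors independent in the appropriate sense — together with the constant $a$ (resp. $b$) provided by Lemma \ref{l2} (resp. Lemma \ref{cl2}), which makes every component of $\mathbf{d}_{kl}$ nonzero, guarantee that $(\Lambda^{kl},\mathbf{d}_{kl})$ is controllable, so a single scalar input stabilizes it. One then performs explicit pole placement: $\mathbf{R}_{kl}$ from \eqref{e66} (resp. \eqref{ce66}) is the change of basis into eigencoordinates, $\Lambda_{sum}^{kl}$ from \eqref{e63} (resp. \eqref{ce63}) records the desired spectral shifts, and the resulting feedback, rewritten in the original variables, is exactly $\Omega_{kl}(t)$ of \eqref{te61},\eqref{tce61}; it assigns to the closed-loop $X_u^{kl}$-dynamics eigenvalues with real part $\le-\eta$.

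It remains to close the loop on the full infinite-dimensional $(k,l)$-system and to sum. Since the feedback is of finite rank and $\mathbf{A}_{kl}$-bounded, a variation-of-constants estimate shows it does not spoil the $e^{-\eta t}$-decay of the $X_s^{kl}$-component, and the induced Dirichlet forcing $\text{i}l\,\mathbf{s}_{kl}(t)$ on the (uniformly, strictly dissipative) normal-vorticity equation is likewise absorbed; hence each controlled mode decays like $e^{-\eta t}$ with a constant uniform over the finite set $0<|k|\le M$. Combining this with the uncontrolled-mode estimate and Parseval's identity yields \eqref{e181}. I expect the genuine obstacle to be the controllability step — equivalently, producing a scalar $a$ for which the two-point trace functional $\psi\mapsto\psi_{yy}(0)+a\,\psi_{yy}(1)$ annihilates no unstable dual eigenfunction. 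This is a unique-continuation statement for the coupled fourth-order Cahn--Hilliard / second-order Stokes eigenvalue problem in $y$, and it is precisely here that replacing $F''(\varphi_{tg})$ by its mean $\gamma$ and $\varphi_{tg}$ by $\varphi_\infty$ is exploited: it endows $\mathbf{A}_{kl}$ with the reflection symmetry $y\mapsto1-y$, so its eigenfunctions split by parity, and on each parity class the simultaneous vanishing of $\psi_{yy}(0)$ and $\psi_{yy}(1)$ forces $\psi\equiv0$ via the ODE and the remaining boundary conditions; the admissible $a$ (resp. $b$) is then any value outside a finite exceptional set, which is what Lemmas \ref{l2},\ref{cl2} provide.
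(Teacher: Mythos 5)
Your overall architecture coincides with the paper's: Fourier decomposition, elimination of the pressure to get a $(v_{kl},\varphi_{kl})$ system driven only through the Neumann traces $v'_{kl}(0),v'_{kl}(1)$ (whence the $\pm\mathrm{i}/k$ factors and the unactuated $k=0$ modes), dissipativity of the modes with $\sqrt{k^2+l^2}>M$ and of the $k=0$ modes, a finite-dimensional unstable block for the remaining modes, a trace nondegeneracy lemma for the dual eigenvectors obtained from the $y\mapsto 1-y$ symmetry (which is exactly why $\varphi_{tg}$ is replaced by $\varphi_\infty$ and $F''(\varphi_{tg})$ by $\gamma$), and finally Parseval. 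However, there is a genuine gap at the central step. The theorem does not merely assert stabilizability: it asserts that the \emph{specific} feedbacks \eqref{te61}, \eqref{tce61} stabilize, and your justification of that formula --- ``explicit pole placement'', with $\mathbf{R}_{kl}$ read as a change of basis into eigencoordinates and $\Lambda^{kl}_{sum}$ as recording desired spectral shifts --- does not match what these objects are and would not produce the stated law. In the paper, $\mathbf{R}_{kl}$ is the inverse of a sum of scaled Gram matrices built from the boundary traces $l_j^{kl}=(Z_j^{kl*})''(0)+a(Z_j^{kl*})''(1)$ (see \eqref{e200}, \eqref{e67}, \eqref{e66}), and $\Lambda^{kl}_{sum}$ is a sum of resolvent-type diagonal matrices $\mathrm{diag}\,(1/(\gamma_i^{kl}+\lambda_j^{kl}))$ coming from lifting the boundary data by the operators $\mathbb{D}_{\gamma_i^{kl}}$ of \eqref{e71}; the identity \eqref{e75}, the state transformation $Z=L_{kl}[v_{kl}-\sum_i\mathbb{D}^1_{\gamma_i^{kl}}(\psi_i^{kl})]$, and the resulting closed-loop equation \eqref{e44ee} for the unstable modes, whose decay is proved by a Lyapunov argument using positive definiteness of the $R_i^{kl}$ (not by eigenvalue assignment), are the actual content of the stabilization step, together with the treatment of the time-derivative terms of the lifted data in \eqref{e12hum}. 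None of this is recoverable from a generic controllability-plus-pole-placement argument, so as written your plan proves at best that \emph{some} scalar feedback works, not that \eqref{te61} does.

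The second gap is in the unique continuation step, which you correctly single out as the crux but under-estimate. Parity plus the available boundary conditions do not force a dual eigenvector with $(Z^*)''(0)=(Z^*)''(1)=0$ to vanish: at $y=0$ one only knows $Z^*(0)=(Z^*)'(0)=(Z^*)''(0)=0$ and $(\Phi^*)'(0)=(\Phi^*)'''(0)=0$, i.e.\ five Cauchy data for a coupled system of total order eight, and an overdetermined two-point count does not yield uniqueness for an eigenvalue problem. The paper's Lemma \ref{l2} has to manufacture the missing data $(Z^*)'''(0)=\Phi^*(0)=(\Phi^*)''(0)=0$ by constructing auxiliary solutions $Z,\Phi,\Psi$ of $\lambda\mathcal{L}Z+\mathcal{F}Z=0$ and $\mathcal{E}\Phi+\lambda\Phi=0$ with prescribed nondegenerate traces and orthogonality to the coupling terms $\mu_1,\mu_2$ (Steps I--III, which themselves need the symmetric/antisymmetric basis analysis of the three-dimensional solution spaces $\mathcal{H}$ and $\mathcal{K}$), and only then concludes by Cauchy uniqueness for the full system \eqref{e155}; also the reduction to the symmetric case uses the symmetrization $(Z^*+\hat{Z^*},\Phi^*+\hat{\Phi^*})$ rather than an a priori parity splitting. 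Your ``via the ODE and the remaining boundary conditions'' skips precisely this construction, which is the technical heart of the lemma; the conclusion that $a$ (resp.\ $b$) may be any value outside a finite exceptional set is, however, consistent with the paper.
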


Roughly speaking, the approach will be the following: in virtue of the $2\pi-$ periodicity assumption, we shall decompose the linear system \eqref{e9} in Fourier modes, obtaining so an infinite  system parametrized by $(k,l)\in\mathbb{Z}\times\mathbb{Z}$. Thus, stabilization of the linear system \eqref{e9} will be equivalent with the stabilization of the infinite system, at each level $(k,l)$. We shall consider different cases of $(k,l)$, and reduce the pressure from the equations, then we shall implement the control design method described in \cite[Chapter 2]{book}. This method has been successfully applied before for the Navier-Stokes equations, Magnetohydrodyanmics equations, the Cahn-Hilliard system, stochastic PDEs,  see \cite{book}.  The idea has its origins in \cite{bsb,ion2}.

\section{Fourier decomposition of the linear system}
Taking advantage of the periodic assumption, we shall place ourselves in the Fourier functional setting. More precisely, we set $L^2_{2\pi}(\mathcal{O})$ to stand for the space containing all the functions $u\in L^2_{loc}(\mathbb{R}\times(0,1)\times\mathbb{R})$ which are $2\pi-$periodic in both $x$ and $z$ directions. The space $L^2_{loc}(\mathbb{R}\times(0,1)\times\mathbb{R})$ consists of all functions whose square is locally Lebesgue integrable on $\mathbb{R}\times(0,1)\times\mathbb{R}$.  For a function $u\in L^2_{2\pi}(\mathcal{O})$ , a \textit{Fourier decomposition} can be considered, namely
$$u(x,y,z)=\sum_{k,l\in\mathbb{Z}}u_{kl}(y)e^{\text{i}kx}e^{\text{i}lz},$$with
$$\sum_{k,l\in\mathbb{Z}}\int_0^1|u_{kl}(y)|^2dy<\infty.$$ The coefficients $u_{kl}$ are called the Fourier modes, and in order to assure the fact that $u$ is real, the following must hold
$$u_{kl}=\overline{u_{-k-l}},\ \forall k,l\in\mathbb{Z}.$$ (Here $\overline{u}$ stands for the complex conjugate of $u\in\mathbb{C}$.) Besides this, the norm in $L^2_{2\pi}(\mathcal{O})$ is defined as
\begin{flalign}\label{e16}\|u\|_{L^2_{2\pi}(\mathcal{O})}=2\pi\left(\sum_{k,l\in\mathbb{Z}}\|u_{kl}\|^2_{L^2(0,1)}\right)^\frac{1}{2}.\end{flalign} 

  Taking into account that some of the operators, we shall deal with  below, might have complex eigenvalues, it will be convenient in the sequel to view a linear operator $A$ as a linear operator (again denoted by $A$) in the complexified space $H=L^2(0,1)+\text{i}L^2(0,1).$ We denote by $\left<\cdot,\cdot\right>$ the scalar product in $H$ and by $\|\cdot\|$ its norm. We shall denote by $H^m(0,1),\ m=1,2,...,$ the standard Sobolev spaces on $(0,1)$, with pivot space $H$, and by
$$H_0^1(0,1):=\left\{v\in H^1(0,1):\ v(0)=v(1)=0\right\},$$
$$H_0^2(0,1):=\left\{v\in H^2(0,1)\cap H_0^1(0,1):\ v'(0)=v'(1)=0\right\}.$$Here, for a function $v:[0,1]\rightarrow\mathbb{C} $ we denote by $v'$ its first derivative. It is well-known the Poincar\' e inequality for a function $f\in H_0^1(0,1)$:
\begin{equation}\label{P} 2\|f\|^2\leq \|f'\|^2.\end{equation}

Below, we shall need to work with the product spaces $H\times H$ or $H\times H\times H\times H$ as-well. Since there is no danger of confusion, we still denote by $\left<\cdot,\cdot\right>$ and by $\|\cdot\|$ the scalar product and the corresponding norm of those spaces, respectively. The difference will be clear from the context. 

$$$$

\noindent\textit{\textbf{Proof of Theorem \ref{t1}.}} We recall system \eqref{e9}. We decompose it in  the Fourier modes $\left\{u_{kl},v_{kl},w_{kl},\varphi_{kl},p_{kl},\mathbf{s}_{kl},\mathbf{t}_{kl}\right\}_{k,l}$ of $u,v,w,\varphi,p,\mathbf{S},\mathbf{T},$ respectively. We get
\begin{flalign}\label{e11}\left\{\begin{array}{l}(u_{kl})_t- \nu[-(k^2+l^2)u_{kl}+u''_{kl}]+\text{i}kUu_{kl}+U'v_{kl}+\text{i}kp_{kl}=-\text{i}k\varepsilon\kappa\Delta\varphi_\infty\varphi_{kl},\\
(v_{kl})_t- \nu[-(k^2+l^2)v_{kl}+v''_{kl}]+\text{i}kUv_{kl}+p'_{kl}=-\varepsilon\kappa\Delta\varphi_\infty\varphi'_{kl}-\varepsilon\kappa\varphi'_\infty(-(k^2+l^2)\varphi_{kl}+\varphi''_{kl}),\\
(w_{kl})_t- \nu[-(k^2+l^2)w_{kl}+w''_{kl}]+\text{i}kUw_{kl}+\text{i}lp_{kl}=-\text{i}l\varepsilon\kappa\Delta\varphi_\infty\varphi_{kl},\\
\text{i}ku_{kl}+v'_{kl}+\text{i}lw_{kl}=0,\\
\begin{aligned}(\varphi_{kl})_t+&\varphi'_\infty v_{kl}+\rho_0\varepsilon\varphi_{kl}^{iv}-\left[2\rho_0\varepsilon(k^2+l^2)+\gamma\right]\varphi''_{kl}\\& +\left[\rho_0\varepsilon(k^2+l^2)^2+\gamma(k^2+l^2)
+\text{i}kU\right]\varphi_{kl}=0,\text{ a.e. in }(0,1),\end{aligned}\\
u_{kl}(0)=\mathbf{s}_{kl},\ u_{kl}(1)=\mathbf{t}_{kl},\ v_{kl}(0)=v_{kl}(1)=w_{kl}(0)=w_{kl}(1)=0,\\
\varphi'_{kl}(0)=\varphi'_{kl}(1)=\varphi'''_{kl}(0)=\varphi'''_{kl}(1)=0.\end{array}\right.\ &&\end{flalign}

Clearly, the asymptotic exponential stability of the linear system \eqref{e9} is equivalent with the stability of \eqref{e11} at each level $(k,l)\in\mathbb{Z}\times\mathbb{Z}$, with the coefficients of the exponential decay independent of the level. Hence, in what follows, we shall consider different cases for the couple $(k,l)$ (which  cover all the possibilities), then design stabilizers at each level. Finally, we shall conclude with the stability result for the linearized system \eqref{e9}. Firstly, let us consider  the most complex case, namely: 

\noindent\textbf{1. The case $k\neq 0$ and $l\neq 0$.} We reduce the pressure from \eqref{e11}, in the next manner: we add the
derivative of the first equation in \eqref{e11}, multiplied by $\text{i}k$, to the derivative
of the third equation in \eqref{e11}, multiplied by $\text{i}l$ , and to the second
equation in \eqref{e11}, multiplied by $(k^2 + l^2)$, then use the free divergence relation to get that
\begin{flalign}\label{e12}\left\{\begin{array}{l}\begin{aligned} & [-v''_{kl} + (k^2+l^2)v_{kl}]_t +  \nu v^{iv}_{kl}-[2 \nu(k^2+l^2)+\text{i}kU]v''_{kl}\\&+[ \nu(k^2+l^2)^2+\text{i}k(k^2+l^2)U+\text{i}kU'']v_{kl}-
\varepsilon\kappa(k^2+l^2)\varphi'''_\infty\varphi_{kl}\\&+(k^2+l^2)\varepsilon\kappa\varphi'_\infty(-(k^2+l^2)\varphi_{kl}+\varphi''_{kl})=0,\end{aligned}\\
\\
\begin{aligned}&(\varphi_{kl})_t+\varphi'_\infty v_{kl}+\rho_0\varepsilon\varphi_{kl}^{iv}-\left[2\rho_0\varepsilon(k^2+l^2)+\gamma\right]\varphi''_{kl}\\&
+\left[\rho_0\varepsilon(k^2+l^2)^2+\gamma(k^2+l^2)+\text{i}kU\right]\varphi_{kl}=0,
\text{ a.e. in }(0,1),\end{aligned}\\
\\
v_{kl}(0)=v_{kl}(1)=0, \ v'_{kl}(0)=-\text{i}k\ \mathbf{s}_{kl},v'_{kl}(1)=-\text{i}k\ \mathbf{t}_{kl},\\
\varphi'_{kl}(0)=\varphi'_{kl}(1)=\varphi'''_{kl}(0)=\varphi'''_{kl}(1)=0.\end{array}\right.\ &&\end{flalign}

Next, we direct our effort to write the equations \eqref{e12} in an abstract form. To this end, for each $k,l\in\mathbb{Z}\setminus\left\{0\right\}$, we denote by $L_{kl}:\mathcal{D}(L_{kl})\subset H\rightarrow H,$ $F_{kl}:\mathcal{D}(F_{kl})\subset H\rightarrow H $ and $E_{kl}:\mathcal{D}(E_{kl})\subset H\rightarrow H$ the operators
\begin{flalign}\label{e13}\begin{aligned}& L_{kl}v:=-v''+(k^2+l^2)v,
\forall v\in \mathcal{D}(L_{kl})=H_0^1(0,1)\cap H^2(0,1);\end{aligned}&&\end{flalign}
\begin{flalign}\label{e14}\begin{aligned}& F_{kl}v:= \nu v^{iv}-[2\nu (k^2+l^2)+\text{i}kU]v''+[\nu (k^2+l^2)^2+\text{i}k(k^2+l^2)U+\text{i}kU'']v,\\&
\forall v\in \mathcal{D}(F_{kl})=H_0^2(0,1)\cap H^4(0,1);\end{aligned}&&\end{flalign}and
\begin{flalign}\label{e15}\begin{aligned}& E_{kl}\varphi:=\rho_0\varepsilon\varphi^{iv}-\left[2\rho_0\varepsilon(k^2+l^2)+\gamma\right]\varphi''+\left[\rho_0\varepsilon(k^2+l^2)^2+\gamma(k^2+l^2)+\text{i}kU\right]\varphi,\\&
\forall \varphi\in \mathcal{D}(E_{kl})=\left\{\varphi\in H^4(0,1):\ \varphi'(0)=\varphi'(1)=\varphi'''(0)=\varphi'''(1)=0\right\}.\end{aligned}&&\end{flalign} In addition, for latter purpose, we consider as-well their differential forms. That is
\begin{flalign}\label{e19}\begin{aligned}& \mathcal{L}_{kl}v:=-v''+(k^2+l^2)v\end{aligned}&&\end{flalign}
\begin{flalign}\label{e20}\begin{aligned}& \mathcal{F}_{kl}v:= \nu v^{iv}-[2\nu (k^2+l^2)+\text{i}kU]v''+[ \nu(k^2+l^2)^2+\text{i}k(k^2+l^2)U+\text{i}kU'']v,\end{aligned}&&\end{flalign}and
\begin{flalign}\label{e21}\begin{aligned}& \mathcal{E}_{kl}\varphi:=\rho_0\varepsilon\varphi^{iv}-\left[2\rho_0\varepsilon(k^2+l^2)+\gamma\right]\varphi''+\left[\rho_0\varepsilon(k^2+l^2)^2+\gamma(k^2+l^2)+\text{i}kU\right]\varphi.\end{aligned}&&\end{flalign}With these notations, system \eqref{e12}, can be equivalently expressed as
\begin{flalign}\label{e23}\left\{\begin{array}{l}(\mathcal{L}_{kl}v_{kl})_t+\mathcal{F}_{kl}v_{kl}-\varepsilon\kappa(k^2+l^2)\varphi'_\infty\mathcal{L}_{kl}\varphi_{kl}-\varepsilon\kappa(k^2+l^2)\varphi'''_\infty\varphi_{kl}=0,\\
(\varphi_{kl})_t+\varphi'_\infty v_{kl}+\mathcal{E}_{kl}\varphi_{kl}=0,\ \text{a.e. in } (0,1),\\
v'_{kl}(0)=-\text{i}k\mathbf{s}_{kl},v'_{kl}(1)=-\text{i}k\mathbf{t}_{kl},\ v_{kl}(0)=v_{kl}(1)=0,\\
\varphi'_{kl}(0)=\varphi'_{kl}(1)=\varphi'''_{kl}(0)=\varphi'''_{kl}(1)=0.
\end{array}\right.\ &&\end{flalign}
Aiming to further improve the expression of the system \eqref{e23}, we  introduce the operator $\mathbb{A}_{kl}:\mathcal{D}(\mathbb{A}_{kl})\subset H\rightarrow H,$ as
\begin{flalign}\label{e26}\mathbb{A}_{kl}:=F_{kl}L^{-1}_{kl},\ \mathcal{D}(\mathbb{A}_{kl})=\left\{z\in H:\ L^{-1}_{kl}z\in\mathcal{D}(F_{kl})\right\}.\end{flalign} Then, define $\mathbf{A}_{kl}:\mathcal{D}(\mathbf{A}_{kl})\subset H\times H\rightarrow H\times H,$ as
\begin{flalign}\label{e25}\mathbf{A}_{kl}\left(\begin{array}{c}z \\ \varphi\end{array}\right):=\left(\begin{array}{ccc}\mathbb{A}_{kl} & & -\varepsilon\kappa(k^2+l^2)\varphi'_\infty L_{kl}-\varepsilon\kappa(k^2+l^2)\varphi'''_\infty\\
\varphi'_\infty L_{kl}^{-1} & & E_{kl}\end{array}\right)\left(\begin{array}{c}z \\  \varphi\end{array}\right),\end{flalign}for all
$$\left(\begin{array}{c}z \\ \varphi\end{array}\right)\in \mathcal{D}(\mathbf{A}_{kl})=\mathcal{D}(\mathbb{A}_{kl})\times \mathcal{D}(E_{kl}).$$ (We note that, in the definition of $\mathbf{A}_{kl}$, the operator $L_{kl}$, in the upper right corner, is understood with the domain $\left\{\varphi\in H^2(0,1): \varphi'(0)=\varphi'(1)=0\right\}$).  
Concerning $\mathbf{A}_{kl}$, we shall prove the following two paramount results.
\begin{lemma}\label{l1}The operator $-\mathbf{A}_{kl}$ generates a $C_0-$analytic semigroup on $\mathcal{D}(L_{kl}^{-1})\times H$, and for each $\lambda\in\rho(-\mathbf{A}_{kl})$ (the resolvent set of $-\mathbf{A}_{kl}$), $(\lambda I+\mathbf{A}_{kl})^{-1}$ is compact. Moreover, one has for each $\eta>0$ there exists $M>0$, sufficiently large such that
\begin{flalign}\label{e30}\begin{aligned}&\sigma(-\mathbf{A}_{kl})\subset\left\{\lambda\in\mathbb{C}:\ \Re\lambda\leq -\eta\right\},\forall \sqrt{k^2+l^2}> M,\end{aligned}\end{flalign}where $\sigma(-\mathbf{A}_{kl})$ is the spectrum of $-\mathbf{A}_{kl}$.
\end{lemma}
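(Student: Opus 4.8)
The plan is to read the three assertions off the $2\times2$ block structure of $\mathbf A_{kl}$. Write $r:=\sqrt{k^2+l^2}$; since $L_{kl}$ is boundedly invertible, $\mathcal D(L_{kl}^{-1})\times H$ coincides with $H\times H$. For the diagonal part $\mathrm{diag}(\mathbb A_{kl},E_{kl})$: as a differential expression $\mathbb A_{kl}=F_{kl}L_{kl}^{-1}$ equals $\nu L_{kl}+\text{i}kU+\text{i}kU''L_{kl}^{-1}$, but its domain is strictly contained in $\mathcal D(L_{kl})$, so $\mathbb A_{kl}$ is not literally a bounded perturbation of $\nu L_{kl}$; it is the Orr--Sommerfeld operator, and $-\mathbb A_{kl}$ is known to generate an analytic $C_0$-semigroup with compact resolvent on $H$ (see \cite[Ch.~2]{book}, \cite{raymond}). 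Likewise $E_{kl}=\rho_0\varepsilon\partial_y^4+(\text{terms of order}\le 2)$, whose leading part $\rho_0\varepsilon\partial_y^4$ with the boundary conditions defining $\mathcal D(E_{kl})$ is self-adjoint and nonnegative (indeed $\langle\rho_0\varepsilon\partial_y^4\varphi,\varphi\rangle=\rho_0\varepsilon\|\varphi''\|^2$), so $-E_{kl}$ also generates an analytic semigroup with compact resolvent. The off-diagonal entries are of lower differential order relative to the diagonal: $\varphi'_\infty L_{kl}^{-1}$ is bounded on $H$, and, at fixed $(k,l)$, $\varphi\mapsto-\varepsilon\kappa r^2(\varphi'_\infty L_{kl}\varphi+\varphi'''_\infty\varphi)$ is second order, hence $E_{kl}$-bounded with relative bound $0$. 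Thus $\mathbf A_{kl}$ is a relatively bounded perturbation, with relative bound $0$, of $\mathrm{diag}(\mathbb A_{kl},E_{kl})$, and the standard perturbation theorem for analytic semigroups yields the generation assertion; in particular $\rho(-\mathbf A_{kl})\neq\emptyset$. Since $\mathcal D(\mathbf A_{kl})=\mathcal D(\mathbb A_{kl})\times\mathcal D(E_{kl})$ embeds compactly into $H\times H$ (the first factor lies in $H^2(0,1)$ and the second in $H^4(0,1)$, by elliptic regularity for $L_{kl}$ and $F_{kl}$), $(\lambda I+\mathbf A_{kl})^{-1}$ is compact for every $\lambda\in\rho(-\mathbf A_{kl})$.

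The core is \eqref{e30}. As $-\mathbf A_{kl}$ has compact resolvent, $\sigma(-\mathbf A_{kl})$ is a set of eigenvalues, so it suffices to show that every eigenvalue $\mu$ of $\mathbf A_{kl}$ satisfies $\Re\mu\geq\eta$ as soon as $r$ is large. Let $\mathbf A_{kl}(z,\varphi)=\mu(z,\varphi)$ with $(z,\varphi)\neq0$ and set $v:=L_{kl}^{-1}z\in H_0^2(0,1)\cap H^4(0,1)$, so that $F_{kl}v-\varepsilon\kappa r^2(\varphi'_\infty L_{kl}\varphi+\varphi'''_\infty\varphi)=\mu L_{kl}v$ and $\varphi'_\infty v+E_{kl}\varphi=\mu\varphi$. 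Pairing the first equation with $v$ and the second with $N\varphi$ (with $N>0$ a constant chosen below, independent of $k,l$), adding, and taking real parts gives
$$\Re\mu\big(\|v'\|^2+r^2\|v\|^2+N\|\varphi\|^2\big)=\Re\langle F_{kl}v,v\rangle+N\Re\langle E_{kl}\varphi,\varphi\rangle-\varepsilon\kappa r^2\Re\langle\varphi'_\infty L_{kl}\varphi+\varphi'''_\infty\varphi,v\rangle+N\Re\langle\varphi'_\infty v,\varphi\rangle.$$
Integration by parts using $v(0)=v(1)=v'(0)=v'(1)=0$ yields $\Re\langle F_{kl}v,v\rangle=\nu\|v''\|^2+2\nu r^2\|v'\|^2+\nu r^4\|v\|^2+\Re\big(\text{i}k\int_0^1U'\,\bar v\,v'\big)$, and since $|k|\leq r$ the last term is at most $\frac{\nu r^2}{2}\|v'\|^2+\frac{\|U'\|_\infty^2}{2\nu}\|v\|^2$, so for $r$ larger than a threshold depending only on $\nu$ and $U$, $\Re\langle F_{kl}v,v\rangle\geq\nu\|v''\|^2+\frac32\nu r^2\|v'\|^2+\frac12\nu r^4\|v\|^2$. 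Using the boundary conditions in $\mathcal D(E_{kl})$, the identity $\Re\langle\text{i}kU\varphi,\varphi\rangle=0$, and $\gamma\geq0$ (this is \eqref{e44}, which holds by $(H_0)$), one similarly gets $\Re\langle E_{kl}\varphi,\varphi\rangle\geq\rho_0\varepsilon(\|\varphi''\|^2+2r^2\|\varphi'\|^2+r^4\|\varphi\|^2)$.

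It remains to absorb the coupling. Since $\|L_{kl}\varphi\|\leq\|\varphi''\|+r^2\|\varphi\|$, the quantity $\varepsilon\kappa r^2|\langle\varphi'_\infty L_{kl}\varphi+\varphi'''_\infty\varphi,v\rangle|$ is bounded by $C_1r^2\|\varphi''\|\,\|v\|+C_1r^4\|\varphi\|\,\|v\|$ with $C_1$ depending only on $\varepsilon,\kappa,\|\varphi'_\infty\|_\infty,\|\varphi'''_\infty\|_\infty$; the decisive observation is that this has exactly the $r$-weights ($r^2\|\varphi''\|\|v\|$ and $r^4\|\varphi\|\|v\|$) matched by the $\nu r^4\|v\|^2$, $N\rho_0\varepsilon\|\varphi''\|^2$ and $N\rho_0\varepsilon r^4\|\varphi\|^2$ on the right-hand side, so Young's inequality lets one first fix $N$ large in terms of $\nu,\rho_0,\varepsilon,\kappa,\|\varphi'_\infty\|_\infty,\|\varphi'''_\infty\|_\infty$ only, and then absorb the remaining $N|\langle\varphi'_\infty v,\varphi\rangle|\leq CN\|v\|\|\varphi\|$ for $r$ large. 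The outcome is $\Re\mu\big(\|v'\|^2+r^2\|v\|^2+N\|\varphi\|^2\big)\geq c\,r^2\big(\|v'\|^2+r^2\|v\|^2+N\|\varphi\|^2\big)$ with $c>0$ independent of $k,l$; the bracket is strictly positive (otherwise $(z,\varphi)=0$), hence $\Re\mu\geq c\,r^2$ for all $r\geq r_0$. Given $\eta>0$, the choice $M:=\max\{r_0,\sqrt{\eta/c}\}$ then gives $\sigma(-\mathbf A_{kl})\subset\{\lambda\in\mathbb{C}:\ \Re\lambda\leq-\eta\}$ whenever $r>M$. I expect the main obstacle to be the one non-elementary ingredient cited above, namely analyticity (and compactness of the resolvent) for the Orr--Sommerfeld operator $\mathbb A_{kl}$, whose domain is not $\mathcal D(L_{kl})$; the rest is perturbation theory plus the energy estimate, whose only delicate aspect is that $N$ and the threshold on $r$ must be independent of the Fourier mode $(k,l)$ — which is precisely why the coupling terms were controlled against the $r^4$-weighted norms rather than naively.
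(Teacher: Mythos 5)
Your proof of the spectral localization \eqref{e30} is essentially the paper's argument: you test the eigenvalue equation (written in the variable $v=L_{kl}^{-1}z$) against $v$ and against a multiple of $\varphi$, take real parts, and absorb the coupling terms by Young's inequality with constants independent of $(k,l)$, so that the $\nu r^4\|v\|^2$ and $\rho_0\varepsilon r^4\|\varphi\|^2$ coercivity forces $\Re\lambda\le-\eta$ for $\sqrt{k^2+l^2}$ large. The only difference is the multiplier on the Cahn--Hilliard component: the paper pairs with $L_{kl}\varphi$ and ends up leaning on the term $\tfrac12\gamma\|L_{kl}\varphi\|^2$ in \eqref{ie40}, whereas you pair with $N\varphi$ and use the $\rho_0\varepsilon r^4\|\varphi\|^2$ part of $\Re\left<E_{kl}\varphi,\varphi\right>$; your variant is, if anything, more robust since it does not degenerate when $\gamma=0$, and your bookkeeping of the $r$-weights (so that $N$ and the threshold are mode-independent) is exactly the point of the paper's estimate \eqref{e33}.

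For the generation and compactness part you take a genuinely different route: the paper proves a resolvent estimate for the full coupled system by the same energy pairing (multiplying by $v$ and $L_{kl}\varphi$) and invokes Hille--Yosida, while you split off the diagonal $\mathrm{diag}(\mathbb{A}_{kl},E_{kl})$, cite the literature for the Orr--Sommerfeld operator $\mathbb{A}_{kl}$, and treat the off-diagonal coupling as a relative-bound-zero perturbation; compactness then follows from elliptic regularity of the domain. This is a legitimate and arguably cleaner modular argument (and the paper itself never proves compactness or the sectoriality estimate in detail), but note two caveats. First, the cited generation of $-\mathbb{A}_{kl}$ is the one ingredient you do not prove, and it is precisely the nontrivial analytic content that the paper's direct estimate is meant to supply, albeit in the weaker norm. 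Second, your identification ``$\mathcal{D}(L_{kl}^{-1})\times H=H\times H$'' is only set-theoretic: the paper's displayed inequality $\|(\lambda I+\mathbf{A}_{kl})^{-1}(f,g)\|_{\mathcal{D}(L_{kl}^{-1})\times H}\le C(\lambda-\omega)^{-1}\|(f,g)\|_{H\times H}$ shows that the first component is measured in the norm $\|L_{kl}^{-1}\cdot\|_H$ (equivalently, $z=L_{kl}v$ is measured through $v$), which is strictly weaker than the $H$-norm; so the semigroup you construct lives on a differently normed space than the one in the statement. Neither caveat affects the eigenvalue bound, since eigenvectors lie in the domain regardless of the ambient norm, but if you want your first part to match the lemma as stated you should either run the perturbation argument in the $\|L_{kl}^{-1}\cdot\|$-norm (where the cited Orr--Sommerfeld results are naturally formulated) or reproduce the paper's direct resolvent estimate.
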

\begin{proof} For each $\lambda\in\mathbb{C}$ and $(f,g)\in \mathcal{D}(L_{kl}^{-1})\times H$, consider the equation
$$\lambda \left(\begin{array}{c}z \\ \varphi\end{array}\right)+\mathbf{A}_{kl}\left(\begin{array}{c}z \\ \varphi\end{array}\right)=\left(\begin{array}{c}f \\ g\end{array}\right)$$
or, equivalently
\begin{flalign}\label{e31}\left\{\begin{array}{l}\lambda L_{kl}v+F_{kl}v-\varepsilon\kappa(k^2+l^2)\varphi'_\infty L_{kl}\varphi-\varepsilon\kappa(k^2+l^2)\varphi'''_\infty\varphi=f,\\
\lambda \varphi +\varphi'_\infty v+E_{kl}\varphi=g.\end{array}\right.\ \end{flalign}
Taking into account \eqref{e19}-\eqref{e21} and the fact that $E_{kl}\varphi$ can be rewritten in the equivalent form as
$$E_{kl}\varphi=-\rho_0\varepsilon(L_{kl}\varphi)''+[\rho_0\varepsilon(k^2+l^2)+\gamma]L_{kl}\varphi+\text{i}kU\varphi,$$
 if we scalarly multiply the first equation of \eqref{e31}  by $v$, then the second equation of \eqref{e31} by $L_{kl}\varphi$, and take the real part of the result, it yields that
\begin{flalign}\label{e32}\left\{\begin{array}{l}\begin{aligned}&\Re\lambda[\|v'\|^2+(k^2+l^2)\|v\|^2]+ \nu\|v''\|^2+2 \nu(k^2+l^2)\|v'\|^2+\nu (k^2+l^2)^2\|v\|^2\\&
+k\int_0^1U'(\Re v'\Im v-\Im v'\Re v)dy=\varepsilon\kappa(k^2+l^2)\Re\left<\varphi'_\infty L_{kl}\varphi,v\right>\\&+\Re\left<\varepsilon\kappa(k^2+l^2)\varphi'''_\infty\varphi_{kl},v\right>+\Re\left<f,v\right>\end{aligned}\\
\\
\begin{aligned}&\Re\lambda [\|\varphi'\|^2+(k^2+l^2)\|\varphi\|^2]+\rho_0\varepsilon\|(L_{kl}\varphi)'\|^2+[\rho_0\varepsilon(k^2+l^2)+\gamma]\|L_{kl}\varphi\|^2\\&
=-\Re\left<\varphi'_\infty v,L_{kl}\varphi\right>-\Re\left(\text{i}k\left<U\varphi,L_{kl}\varphi\right>\right)+\Re\left<g,L_{kl}\varphi\right>.\end{aligned}
\end{array}\right.\ &&\end{flalign}By Young's inequality,  the fact that $U$ is bounded and all the derivatives of $\varphi_\infty$ are bounded,  it immediately follows from the summation of the two equations of \eqref{e32}, that
\begin{flalign}\label{e33}\begin{aligned}&\Re\lambda[\|v'\|^2+(k^2+l^2)\|v\|^2+\|\varphi'\|^2+(k^2+l^2)\|\varphi\|^2]
+\nu \|v''\|^2+2\nu (k^2+l^2)\|v'\|^2\\& +\nu(k^2+l^2)^2\|v\|^2
+\rho_0\varepsilon\|(L_{kl}\varphi)'\|^2+[\rho_0\varepsilon(k^2+l^2)+\gamma]\|L_{kl}\varphi\|^2\\&
\leq \rho_0\varepsilon(k^2+l^2)\|L_{kl}\varphi\|^2+C_1(k^2+l^2)\|v\|^2+
\nu k^2\|v'\|^2
+ \frac{1}{2}\gamma\|L_{kl}\varphi\|^2\\&+C_2(\|v\|^2+\|\varphi\|^2)+C_3(\|f\|^2+\|g\|^2),\end{aligned}&&\end{flalign}where $C_i,i=1,2,3$ are some positive constants independent of $k$ or $l$. We conclude from this that
$$\|(L_{kl}^{-1}z,\varphi)\|_{H\times H}\leq \frac{C}{\lambda-\omega}\|(f,g)\|_{H\times H},\ \forall \lambda >\omega,$$ for some $\omega>0$ and $C>0$. Equivalently
$$\|(\lambda I+\mathbf{A}_{kl})^{-1}(f,g)\|_{\mathcal{D}(L_{kl}^{-1})\times H}\leq\frac{C}{\lambda-\omega}\|(f,g)\|_{H\times H},\ \forall \lambda >\omega.$$Hence, the Hille-Yosida theorem assures that $- \mathbf{A}_{kl}$ is the infinitesimal generator of a $C_0-$analytic semigroup, $\left\{e^{-\mathbf{A}_{kl}t},\ t\geq0\right\}$. 

Now, let $\lambda\in\mathbb{C}$ and $(z_{kl},\varphi_{kl})\in H\times H$, such that  $\lambda(z_{kl},\varphi_{kl})+\mathbf{A}_{kl}(z_{kl},\varphi_{kl})=0$. Then, similarly as in \eqref{e31}-\eqref{e33}, we get that
\begin{flalign}\label{ie40}\begin{aligned}&\Re\lambda[\|v_{kl}'\|^2+(k^2+l^2)\|v_{kl}\|^2+\|\varphi_{kl}'\|^2+(k^2+l^2)\|\varphi_{kl}\|^2] \\&
+[ \nu(k^2+l^2)^2-C_1(k^2+l^2)]\|v_{kl}\|^2+\frac{1}{2}\gamma\|L_{kl}\varphi_{kl}\|^2-C_2\|\varphi_{kl}\|^2\leq 0, \forall k,l\in\mathbb{Z}\setminus\left\{0\right\}. \end{aligned}&&\end{flalign}Here, $L_{kl}v_{kl}=z_{kl}.$
Thus, for each $\eta>0$, \eqref{ie40} says that if we take $k^2+l^2$ sufficiently large (for a large $M>0$, we take $\sqrt{k^2+l^2}> M$), then $\Re\lambda\leq-\eta$. This concludes the proof.
\end{proof} By Lemma \ref{l1} we see that if $k^2+l^2$ is large enough, then the operators $-\mathbf{A}_{kl}$ have  stable spectrum. One may conclude from this that the system \eqref{e11} is exponentially asymptotically stable when $k^2+l^2$ is large enough. This is indeed so. To see this, put everywhere null boundary conditions in the system \eqref{e11}. Next,  scalarly multiply the first, second, third and fifth equation of \eqref{e11} by $u_{kl},v_{kl},w_{kl}$ and $\varphi_{kl}$, respectively. Summing them  and taking  the real part of the result, we get that
$$\begin{aligned}& \frac{1}{2}\frac{d}{dt}\left(\|u_{kl}\|^2+\|v_{kl}\|^2+\|w_{kl}\|^2+\|\varphi_{kl}\|^2\right)+\nu(k^2+l^2)(\|u_{kl}\|^2+\|v_{kl}\|^2+\|w_{kl}\|^2)\\&
+\nu\|u'_{kl}\|^2+\nu\|v'_{kl}\|^2+\nu\|w'_{kl}\|^2+\rho_0\varepsilon\|\varphi_{kl}''\|^2\\&
+[2\rho_0\varepsilon(k^2+l^2)+\gamma]\|\varphi_{kl}'\|^2+[\rho_0\varepsilon(k^2+l^2)^2+\gamma(k^2+l^2)]\|\varphi_{kl}\|^2\\&
=\Re\left[\int_0^1U'v_{kl}\overline{u_{kl}}dy+\varepsilon\kappa(k^2+l^2)\int_0^1\varphi'_\infty\varphi_{kl}\overline{v_{kl}}dy-\varepsilon\kappa\int_0^1\varphi'_\infty\varphi_{kl}''\overline{v_{kl}}dy-\int_0^1\varphi'_\infty v_{kl}\overline{\varphi_{kl}}dy\right]\\&
+\Re\int_0^1 \varepsilon\kappa(k^2+l^2)\varphi'''_\infty\varphi_{kl}\overline{v_{kl}}dy.\end{aligned}$$
Again using conveniently Young's inequality, we get from above that
$$\begin{aligned}& \frac{1}{2}\frac{d}{dt}\left(\|u_{kl}\|^2+\|v_{kl}\|^2+\|w_{kl}\|^2+\|\varphi_{kl}\|^2\right)+\nu(k^2+l^2)(\|u_{kl}\|^2+\|v_{kl}\|^2+\|w_{kl}\|^2)\\&
+\nu\|u'_{kl}\|^2+\nu\|v'_{kl}\|^2+\nu\|w'_{kl}\|^2+\rho_0\varepsilon\|\varphi_{kl}''\|^2\\&
+[2\rho_0\varepsilon(k^2+l^2)+\gamma]\|\varphi_{kl}'\|^2+[\rho_0\varepsilon(k^2+l^2)^2+\gamma(k^2+l^2)]\|\varphi_{kl}\|^2\\&
\leq \rho_0\varepsilon(k^2+l^2)^2\|\varphi_{kl}\|^2+\rho_0\varepsilon\|\varphi_{kl}''\|^2+C(\|u_{kl}\|^2+\|v_{kl}\|^2+\|w_{kl}\|^2+\|\varphi_{kl}\|^2),
\end{aligned}$$where $C$ is some positive constant independent of $k$ and $l$. It follows from above that
$$\begin{aligned}& \frac{1}{2}\frac{d}{dt}\left(\|u_{kl}\|^2+\|v_{kl}\|^2+\|w_{kl}\|^2+\|\varphi_{kl}\|^2\right)\\&
+\min\left\{\gamma(k^2+l^2),\nu(k^2+l^2)\right\}(\|u_{kl}\|^2+\|v_{kl}\|^2+\|w_{kl}\|^2+\|\varphi_{kl}\|^2)\\&
\leq C(\|u_{kl}\|^2+\|v_{kl}\|^2+\|w_{kl}\|^2+\|\varphi_{kl}\|^2).\end{aligned}$$Hence, if $\sqrt{k^2+l^2}>M$, with $M>0$ large enough, we have that there exist $C_1,\eta_1>0$, independent of $k$ or $l$, such that
\begin{flalign}\label{e59}\|(u_{kl}(t),v_{kl}(t),w_{kl}(t),\varphi_{kl}(t))\|^2\leq C_1e^{-\eta_1t}(\|(u_{kl}(0),v_{kl}(0),w_{kl}(0),\varphi_{kl}(0)\|^2,\ \forall k^2+l^2>M.\end{flalign}
Therefore, we have to control the system \eqref{e11} for $\sqrt{k^2+l^2}\leq M$ only.

Furthermore, Lemma \ref{l1} guarantees that, for each couple $(k,l)$, $-\mathbf{A}_{kl}$ has a countable set of eigenvalues, denoted by $\left\{\lambda_j^{kl}\right\}_{j=1}^\infty$ (we repeat each $\lambda$ according to its multiplicity). Moreover, there is only a finite number $N_{kl}\in\mathbb{N}$ of eigenvalues with $\Re \lambda_j^{kl}\geq0,$ which are usually called the unstable eigenvalues. Let us denote by $\left\{\left(Z_j^{kl}, \Phi_j^{kl}\right)\right\}_{j=1}^\infty$ and by $\left\{\left(Z_j^{kl*}, \Phi_j^{kl*}\right)\right\}_{j=1}^\infty$ the corresponding eigenvectors system of $-\mathbf{A}_{kl}$ and of its dual $-\mathbf{A}^*_{kl},$ respectively. Here and below, we  denote by $(\cdot,\cdot)$ or by $\left(\begin{array}{c}\cdot\\ \cdot\end{array}\right)$ vectors in $H\times H$.

We assume that the following assumption holds:
$$(H_1)\text{ All the unstable eigenvlues $\lambda_{j}^{kl},\ \sqrt{k^2+l^2}\leq M,\ j=1,2,..., N_{kl},$ are semisimple.}$$
This means that, for of all $\lambda_j^{kl},$ the geometric multiplicity  coincides with the algebraic multiplicity. We notice that, for the case of the Navier-Stokes equations it is shown in \cite{9,8}  that the property $(H_1)$, or more generally that $\lambda_{j}$ are simple eigenvalues, is generic with respect to the coefficients. Similar results can be obtained for the Cahn-Hilliard-Navier-Stokes equations.

The next key lemma  shows a unique  continuation type result for the eigenvectors of the dual operator, corresponding to the unstable eigenvalues. It is a classical tool when dealing with boundary controllers, and it represents the main ingredient for the control design here. 

\begin{lemma}\label{l2} Under assumption $(H_1)$, there exists an $a\in\mathbb{C}$ such that  for each unstable eigenvalue $\overline{\lambda_j^{kl}}$ $(\sqrt{k^2+l^2}\leq M$ with $M$ from Lemma \ref{l1}; and $j\in\left\{1,2,...,N_{kl}\right\}),$ of the dual operator $-\mathbf{A}_{kl}^*$, one can choose the corresponding eigenvector $\left(Z_j^{kl*}, \Phi_j^{kl*}\right)$   in such a way that
\begin{flalign}\label{e40} (Z_{j}^{kl*})''(0)+a(Z_j^{kl*})''(1)\neq0.\end{flalign}
\end{lemma}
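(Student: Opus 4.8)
\medskip
\noindent\textbf{Proof idea.} The plan is to show that \eqref{e40} can be realized with \emph{any} constant $a\in\mathbb{C}$, $a\neq\pm1$. The decisive structural remark — and the point at which the replacement of $\varphi_{tg}$ by its antisymmetric part $\varphi_\infty$ in \eqref{e9} is used in an essential way — is that the abstract system is invariant under the reflection $y\mapsto 1-y$. Indeed, the Poiseuille profile $U(y)=C_U y(1-y)$ is symmetric about $y=\tfrac12$ (so $U''$ is constant), while $\varphi_\infty$ is antisymmetric, hence $\varphi'_\infty$ and $\varphi'''_\infty$ are symmetric; together with the cancellation of the $\varphi''_\infty$--terms already observed in passing from \eqref{e9} to \eqref{e12}, this shows that the unitary involution $\mathcal{R}\colon H\times H\to H\times H$, $\mathcal{R}(z,\varphi)(y):=(z(1-y),\varphi(1-y))$, leaves $\mathcal{D}(\mathbf{A}_{kl})$ invariant and commutes with the operator $\mathbf{A}_{kl}$ of \eqref{e25}; taking adjoints, $\mathcal{R}$ commutes with $\mathbf{A}_{kl}^*$ as well. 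Consequently each unstable eigenspace of $-\mathbf{A}_{kl}^*$ is $\mathcal{R}$--invariant and splits into its $\mathcal{R}$--symmetric and $\mathcal{R}$--antisymmetric parts, so one may choose each eigenvector $(Z_j^{kl*},\Phi_j^{kl*})$ to be symmetric or antisymmetric in $y$. For such a vector $(Z^*)''(1)=\pm(Z^*)''(0)$, hence $(Z^*)''(0)+a(Z^*)''(1)=(1\pm a)(Z^*)''(0)$; since $1\pm a\neq0$, the inequality \eqref{e40} reduces to the single requirement that $(Z^*)''(0)\neq0$.

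It therefore remains to prove the unique continuation statement $(\star)$: \emph{if $\overline{\lambda}$ is an unstable eigenvalue of $-\mathbf{A}_{kl}^*$ with $\sqrt{k^2+l^2}\leq M$ and $(Z^*,\Phi^*)\neq0$ is an associated eigenvector which is symmetric or antisymmetric in $y$, then $(Z^*)''(0)\neq0$.} Assume, towards a contradiction, that $(Z^*)''(0)=0$. Writing out $\mathbf{A}_{kl}^*(Z^*,\Phi^*)=-\overline{\lambda}(Z^*,\Phi^*)$ shows that $(Z^*,\Phi^*)$ solves a linear ordinary differential system of total order eight, with smooth coefficients and nowhere vanishing leading coefficients ($\nu$ in the first line, $\rho_0\varepsilon$ in the second), whose first line is fourth order in $Z^*$ coupled to $\Phi^*$ through $\varphi'_\infty$ and whose second line is fourth order in $\Phi^*$ coupled to $Z^*$ through $\varphi'_\infty$ and $\varphi'''_\infty$. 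From $(Z^*,\Phi^*)\in\mathcal{D}(\mathbf{A}_{kl}^*)$ one reads off $Z^*(0)=Z^*(1)=(Z^*)'(0)=(Z^*)'(1)=0$ and $(\Phi^*)'(0)=(\Phi^*)'(1)=(\Phi^*)'''(0)=(\Phi^*)'''(1)=0$; the contradiction hypothesis together with the parity of $(Z^*,\Phi^*)$ adds $(Z^*)''(0)=(Z^*)''(1)=0$ and four further homogeneous relations at the midpoint, namely $(Z^*)'(\tfrac12)=(Z^*)'''(\tfrac12)=(\Phi^*)'(\tfrac12)=(\Phi^*)'''(\tfrac12)=0$ in the symmetric case and $Z^*(\tfrac12)=(Z^*)''(\tfrac12)=\Phi^*(\tfrac12)=(\Phi^*)''(\tfrac12)=0$ in the antisymmetric one. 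Restricted to $[0,\tfrac12]$, we thus face an eighth order linear system subject to five homogeneous conditions at $y=0$ and four at $y=\tfrac12$, i.e. one more than its order. An over-determination argument then forces $(Z^*,\Phi^*)\equiv0$ on $[0,\tfrac12]$, hence on $[0,1]$ by symmetry, contradicting $(Z^*,\Phi^*)\neq0$: on the portions of $[0,\tfrac12]$ where $\varphi'_\infty\neq0$ one eliminates $\Phi^*$ from the first equation and is left with a single eighth order equation for $Z^*$, while on any subinterval where $\varphi'_\infty$ vanishes identically (as near $y=0$ for the $\varphi_{tg}$ discussed after \eqref{e135}) the two equations decouple and the vanishing Cauchy data of $Z^*$ and of $\Phi^*$ propagate by uniqueness for the Cauchy problem.

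Granting $(\star)$, one fixes any $a\in\mathbb{C}\setminus\{1,-1\}$. By Lemma \ref{l1} there are finitely many couples $(k,l)$ with $\sqrt{k^2+l^2}\leq M$ and, for each, finitely many unstable eigenvalues $\overline{\lambda_j^{kl}}$ of $-\mathbf{A}_{kl}^*$; decomposing each of the associated eigenspaces (semisimple, by $(H_1)$) into its symmetric and antisymmetric parts and selecting a basis of eigenvectors inside these parts, $(\star)$ yields $(Z_j^{kl*})''(0)\neq0$, and therefore $(Z_j^{kl*})''(0)+a(Z_j^{kl*})''(1)=(1\pm a)(Z_j^{kl*})''(0)\neq0$, which is \eqref{e40}. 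The heart of the matter, and the expected main obstacle, is the over-determination step $(\star)$: one must propagate the boundary data of $Z^*$ and $\Phi^*$ across the (possibly several) subintervals on which $\varphi'_\infty$ vanishes, so that the precise boundary conditions built into \eqref{e25} and the fine structure of $\varphi_\infty$ have to be exploited simultaneously.
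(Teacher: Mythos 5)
Your first half — exploiting the reflection $y\mapsto 1-y$ (with $U$ symmetric and $\varphi_\infty$ antisymmetric, so $\varphi_\infty'$, $\varphi_\infty'''$ symmetric) to reduce to eigenvectors of pure parity, and then taking any fixed $a\neq\pm1$ so that $(Z^*)''(0)+a(Z^*)''(1)=(1\pm a)(Z^*)''(0)$ — is sound and is essentially the same symmetry device the paper uses (the paper works instead with $Z^*\pm\hat{Z^*}$ and picks $a$ avoiding the finitely many bad values; your explicit $a$ is a harmless variant). The problem is the step you yourself flag as the heart of the matter, $(\star)$: the claim that $(Z^*)''(0)=0$ forces $(Z^*,\Phi^*)\equiv0$. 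Your argument for it is a count — five homogeneous conditions at $y=0$ plus four parity conditions at $y=\tfrac12$, ``nine conditions for an eighth order system'' — followed by an appeal to ``an over-determination argument.'' This is not a proof. Nine homogeneous linear functionals on the eight-dimensional solution space of the ODE system can perfectly well have a common nontrivial zero; indeed the eigenvector itself already satisfies eight two-point homogeneous conditions and is nonzero, and the midpoint conditions are automatic consequences of the parity you imposed, so they carry no independent information: your ``overdetermined'' problem on $[0,\tfrac12]$ is just a reformulation of the original eigenvalue problem restricted to the parity class, plus the single extra scalar condition $(Z^*)''(0)=0$. One extra scalar condition does not yield triviality without a genuine mechanism.

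The propagation argument you sketch also cannot start: at $y=0$ you only have $Z^*(0)=(Z^*)'(0)=(Z^*)''(0)=0$ (three conditions for a fourth order equation in $Z^*$) and $(\Phi^*)'(0)=(\Phi^*)'''(0)=0$ (two for a fourth order equation in $\Phi^*$), so there is no full set of vanishing Cauchy data to propagate, whether or not $\varphi_\infty'$ vanishes near $0$, and eliminating $\Phi^*$ where $\varphi_\infty'\neq0$ only makes things worse (you would need eight data for $Z^*$ at one point). This missing mechanism is precisely what the paper's Steps I--IV supply: one constructs auxiliary solutions $Z$ of $\lambda\mathcal{L}Z+\mathcal{F}Z=0$ and $\Phi,\Psi$ of $\mathcal{E}\Phi+\lambda\Phi=0$, chosen inside the three-dimensional solution spaces $\mathcal{H}$, $\mathcal{K}$ with prescribed parity, prescribed nondegenerate boundary traces ($Z(0)+Z(1)\neq0$, $\Phi'''(0)-\Phi'''(1)\neq0$, $\Psi'(0)-\Psi'(1)\neq0$) and orthogonal to the coupling terms $\mu_1=\varphi_\infty'\Phi^*$ and $\mu_2$; pairing the eigenvector equations with these test functions and integrating by parts then yields the additional identities $(Z^*)'''(0)=\Phi^*(0)=(\Phi^*)''(0)=0$, which (together with parity) complete the vanishing Cauchy data of the coupled system at the single point $y=0$ and only then give $(Z^*,\Phi^*)\equiv0$. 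Without an argument of this type (or some other unique continuation input), your proof of $(\star)$ — and hence of the lemma — is incomplete.
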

\begin{proof}In fact, we shall show that we can choose  the eigenvector such that either $(Z_j^{kl*})''(0)\neq0$ or $(Z_j^{kl*})''(1)\neq0.$ From this, in a straightforward manner, we shall conclude to the proof. Let us assume by contradiction that
$$(Z_j^{kl*})''(0)=(Z_j^{kl*})''(1)=0,$$from which we shall arrive to a contradiction.

Recall that, for a function $f:[0,1]\rightarrow \mathbb{C}$, we denote by $\hat{f}:[0,1]\rightarrow\mathbb{C}$:
$$\hat{f}(y):=f(1-y),\ y\in[0,1].$$We say that $f$ is symmetric if $f\equiv\hat{f}$, and antisymmetric if $f\equiv-\hat{f}$. Let us denote by 
$$\mathcal{S}=\left\{f:[0,1]\rightarrow\mathbb{C}:\ f\equiv \hat{f}\right\} \text{ and by }\mathcal{AS}=\left\{f:[0,1]\rightarrow\mathbb{C}:\ f\equiv -\hat{f}\right\}.$$

 Let  us recall relation \eqref{e25}, which gives the exact form the operator $\mathbf{A}_{kl}$. Hence, its dual has the following form
\begin{flalign}\label{e74}\mathbf{A}_{kl}^* \left(\begin{array}{c}z \\ \varphi\end{array}\right)=\left(\begin{array}{ccc}L_{kl}^{-1}F_{kl}^* & & L_{kl}^{-1}(\varphi'_\infty\cdot)\\  -\varepsilon\kappa(k^2+l^2) L_{kl}(\varphi'_\infty\cdot)-\varepsilon\kappa(k^2+l^2)\varphi'''_\infty  & & E^*_{kl}\end{array}\right)\left(\begin{array}{c}z \\  \varphi\end{array}\right),\end{flalign} where $F_{kl}^*$ and $E_{kl}^*$ are the dual operators of $F_{kl}$ and $E_{kl}$, respectively. Consequently, the relation  
$$\overline{\lambda_{kl}}\left(Z_j^{kl*}, \Phi_j^{kl*}\right)+\mathbf{A}_{kl}^*\left(Z_j^{kl*}, \Phi_j^{kl*}\right)=0$$ is equivalent with (for the ease of writing we shall replace $\lambda_{kl},\ Z_j^{kl*},\ \Phi_{j}^{kl*}$ simply by $\lambda, Z^*$ and $\Phi^*$, respectively. Also, we shall write $\mathcal{L},\mathcal{F},\mathcal{E}$, instead of $\mathcal{L}_{kl},\mathcal{F}_{kl},\mathcal{E}_{kl}$, respectively. )
\begin{flalign}\label{e42}\left\{\begin{array}{l}\begin{aligned}&  \nu (Z^*)^{iv}-(2 \nu(k^2+l^2)-\text{i}kU+\overline{\lambda})(Z^*)''-2\text{i}kU'(Z^*)'\\&
+[(k^2+l^2)\overline{\lambda}+\nu (k^2+l^2)^2-\text{i}k(k^2+l^2)U]Z^*+\varphi'_\infty\Phi^*=0, \end{aligned}\\ \\
\begin{aligned}&-\varepsilon \kappa(k^2+l^2)L_{kl}(\varphi'_\infty Z^*)-\varepsilon\kappa(k^2+l^2)\varphi'''_\infty Z^*
+\rho_0\varepsilon(\Phi^*)^{iv}-\left[2\rho_0\varepsilon(k^2+l^2)+\gamma\right](\Phi^*)''\\&+\left[\rho_0\varepsilon(k^2+l^2)^2+\gamma(k^2+l^2)+\overline{\lambda}-\text{i}kU\right]\Phi^*=0,\ \text{ a.e. in }(0,1),\end{aligned}\\
\\
 Z^*(0)=Z^*(1)=(Z^*)'(0)=(Z^*)'(1)=0=(Z^*)''(0)=(Z^*)''(1)=0,\\
(\Phi^*)'(0)=(\Phi^*)'(1)=(\Phi^*)'''(0)=(\Phi^*)'''(1)=0.\end{array}\right.\ \end{flalign}Readily seen, the solution of the system \eqref{e42} satisfies
$$Z^*\equiv 0 \Leftrightarrow \Phi^*\equiv 0.$$ This is indeed so, assume, for instance, that $Z^*\equiv 0$. Plugging this in the second equation of \eqref{e42}, it yields that
$$\rho_0\varepsilon(\Phi^*)^{iv}-\left[2\rho_0\varepsilon(k^2+l^2)+\gamma\right](\Phi^*)''+\left[\rho_0\varepsilon(k^2+l^2)^2+\gamma(k^2+l^2)+\overline{\lambda}-\text{i}kU\right]\Phi^*=0.$$Then, scalarly multiply it by $\Phi^*$ and taking the real part of the result, we obtain that
 $$\rho_0\varepsilon\|(\Phi^*)^{''}\|^2+\left[2\rho_0\varepsilon(k^2+l^2)+\gamma\right]\|(\Phi^*)'\|^2+\left[\rho_0\varepsilon(k^2+l^2)^2+\gamma(k^2+l^2)+\Re\lambda\right]\|\Phi^*\|^2=0,$$where using the fact that $\lambda$ is an unstable eigenvalue, i.e. $\Re\lambda\geq0$, we immediately get $\Phi^*\equiv0$. On the other hand, if $\Phi^*\equiv0$, plugging this in the second equation of \eqref{e42}, we deduce that the analytic function $Z^*$ satisfies a second order linear differential equation and has three null boundary conditions $Z^*(0)=(Z^*)'(0)=(Z^*)''(0)=0$. Clearly, this implies that $Z^*\equiv0$. 

Since $U=\hat{U}$ and $\varphi_\infty=-\hat{\varphi_\infty}$, it is easy to see that if $(Z^*,\Phi^*)$ satisfies \eqref{e42}, then $(\hat{Z^*},\hat{U^*})$ satisfies \eqref{e42} as-well.  Consequently, $(Z^*+\hat{Z^*}, \Phi^*+\hat{\Phi^*})$ and $(Z^*-\hat{Z^*},\Phi^*-\hat{\Phi^*})$ satisfy \eqref{e42}. Moreover, it is obvious that 
$$Z^*\in \mathcal{S} \Leftrightarrow \Phi^*\in\mathcal{S} \text{ and }Z^*\in \mathcal{AS} \Leftrightarrow \Phi^*\in\mathcal{AS}.$$ Indeed, assume, for instance, that $Z^*\in \mathcal{S}$, then $(Z^*-\hat{Z^*},\Phi^*-\hat{\Phi^*})$ satisfies \eqref{e42}. Since $Z^*-\hat{Z^*}\equiv 0$, it follows from above that $\Phi^*-\hat{\Phi^*}\equiv 0$, namely $\Phi^*\in\mathcal{S}$ (the other cases can be treated similarly). 

Below, all our effort is to show that the additional hypothesis $(Z^*)''(0)=(Z^*)''(1)=0$ implies that $Z^*\equiv 0$, and consequently $\Phi^*\equiv0$, which   contradicts  the fact that $(Z^*,\Phi^*)$ is an eigenvector. In order to do that,  for the beginning we shall work under an additional assumption, namely $Z^*$ has a symmetry (and consequently $\Phi^*$ has the same symmetry.) From this it will yield that $Z^*\equiv \Phi^*\equiv0$. Next, in case that $Z^*$ has no symmetry, we shall replace $(Z^*,\Phi^*)$ by $(Z^*+\hat{Z^*},\Phi^*+\hat{\Phi^*})$. Note that $(Z^*+\hat{Z^*},\Phi^*+\hat{\Phi^*})$ satisfies \eqref{e42}, 
$(Z^*+\hat{Z^*})''(0)=(Z^*+\hat{Z^*})''(1)=0,$ and that $(Z^*+\hat{Z^*},\Phi^*+\hat{\Phi^*})$ is symmetric. Relying on the previous results, it will follow that $(Z^*+\hat{Z^*},\Phi^*+\hat{\Phi^*})\equiv (0,0)$, and consequently $Z^*$ is antisymmetric.But, this contradicts the fact that  $Z^*$ has no symmetry.  Therefore, in order to conclude with the proof, it is enough to consider only the case when $Z^*$ has a symmetry. To fix the ideas, assume for instance that $Z^*$ is symmetric, consequently $\Phi^*$ is symmetric as-well (the other case can be treated similarly).

Let us denote by 
$$\mu_1:= \varphi'_\infty\Phi^* \text{ and by } \mu_2:=-\varepsilon \kappa(k^2+l^2)L_{kl}(\varphi'_\infty Z^*)-\varepsilon\kappa(k^2+l^2)\varphi'''_\infty Z^*.$$ With these notations, system \eqref{e42} reads as
\begin{flalign}\label{e137}\left\{\begin{array}{l}\begin{aligned}&  \nu (Z^*)^{iv}-(2 \nu(k^2+l^2)-\text{i}kU+\overline{\lambda})(Z^*)''-2\text{i}kU'(Z^*)'\\&
+[(k^2+l^2)\overline{\lambda}+\nu (k^2+l^2)^2-\text{i}k(k^2+l^2)U]Z^*+\mu_1=0, \end{aligned}\\
\\
\begin{aligned}&\mu_2
+\rho_0\varepsilon(\Phi^*)^{iv}-\left[2\rho_0\varepsilon(k^2+l^2)+\gamma\right](\Phi^*)''\\&+\left[\rho_0\varepsilon(k^2+l^2)^2+\gamma(k^2+l^2)+\overline{\lambda}-\text{i}kU\right]\Phi^*=0,\ \text{ a.e. in }(0,1),\end{aligned}\\
\\
 Z^*(0)=Z^*(1)=(Z^*)'(0)=(Z^*)'(1)=0=(Z^*)''(0)=(Z^*)''(1)=0,\\
(\Phi^*)'(0)=(\Phi^*)'(1)=(\Phi^*)'''(0)=(\Phi^*)'''(1)=0.\end{array}\right.\ \end{flalign}

\noindent\textbf{Step I.} In this step, our aim is to show that there exists $Z$ such that 
\begin{equation}\label{140}\left\{\begin{array}{l}\lambda \mathcal{L}Z+\mathcal{F}Z=0, \text{ in }(0,1),\\
Z(0)+Z(1)\neq 0 \text{ and }\left<Z,\mu_1\right>=0.\end{array}\right.\ \end{equation}
To this end, let us denote by 
$$\mathcal{H}:=\left\{V\in H^4(0,1):\ \lambda\mathcal{L}V+\mathcal{F}V=0,\ V''(0)=V''(1)=0\right\}.$$ (To recall the form of $\mathcal{L}$ and $\mathcal{F}$ see \eqref{e19}-\eqref{e20}.) We know that $\mathcal{H}$ is a linear space whose $\dim_\mathbb{C}\mathcal{H}=3.$ Next, we shall show that there cannot exist $V_1,V_2\in\mathcal{H}\cap \mathcal{S}$ which are linearly independent. But firstly, we claim that  for a nonzero $V\in\mathcal{H}\cap\mathcal{S}$ necessarily $V(0)\neq0$. Indeed, otherwise one has $\mathcal{L}V(0)=\mathcal{L}V(1)=0$. Since $V\in\mathcal{H}$, it follows that 
$$0=\lambda\mathcal{L}V+\mathcal{F}V=\lambda\mathcal{L}V-\nu(\mathcal{L}V)''+\nu(k^2+l^2)\mathcal{L}V+\text{i}kU\mathcal{L}V+\text{i}kU''V.$$Scalalry multiplying the above equation by $\mathcal{L}V$ and taking the real part of the result, it yields that 
$$\Re\lambda\|\mathcal{L}V\|^2+\nu\|(\mathcal{L}V)'\|^2+\nu(k^2+l^2)\|\mathcal{L}V\|^2=0,$$since
$$\left<U''V,\mathcal{L}V\right>=-2C_U(\|V'\|^2+(k^2+l^2)\|V\|^2)\in\mathbb{R}.$$Recalling that $\Re\lambda\geq0$, the above implies that $V\equiv 0$, which is in contradiction with our choice of $V$. 

Now, let nonzero $V_1,V_2\in \mathcal{H}\cap\mathcal{S}.$ From above we have that $V_1(0)\neq0 $ and $V_2(0)\neq0.$ Define then
$$\tilde{V}:=V_1-\frac{V_1(0)}{V_2(0)}V_2\ \in\mathcal{H}\cap\mathcal{S}.$$Since $\tilde{V}(0)=0$, it follows from above that $\tilde{V}\equiv 0$, consequently $V_1$ and $V_2$ are linearly dependent.

The same holds true for $V_1,V_2\in\mathcal{H}\cap\mathcal{AS}$. Therefore, a basis for $\mathcal{H}$ can be chosen to be of the form $\left\{V_1,V_2,V_3\right\}$, where $V_1\in\mathcal{S},\ V_2\in\mathcal{AS}$ and $V_3$ has necessarily no symmetry. Indeed, there surely exists some $V_4\in\mathcal{H}$ that has no symmetry. Then, $V_4-\hat{V_4}\in\mathcal{H}\cap\mathcal{AS}$ and $V_4+\hat{V_4}\in\mathcal{H}\cap\mathcal{S}$ are linearly independent. This set can be completed to a basis, and so the conclusion follows.

\noindent\textbf{Case a).} $V_1$ verifies $\left<\mu_1,V_1\right>=0$. From above we know that $V_1(0)\neq0$, and so, $V_1(0)+V_1(1)=2V_1(0)\neq0$. Hence, one can take $Z:=V_1$.

\noindent\textbf{Case b).} If $\left<\mu_1,V_1\right>\neq0$. Then, there are two possibilities for $V_3$: 

\textbf{Case b.1).} If $\left<\mu_1,V_3\right>=0.$ We claim that $V_3(0)+V_3(1)\neq0$. Otherwise, define
$$\tilde{V}:=V_3+\hat{V_3}.$$We have $\tilde{V}\in \mathcal{H}\cap\mathcal{S}$ and $\tilde{V}(0)=0$. From above it yields that $\tilde{V}\equiv0,$ and consequently $V_3\in\mathcal{AS}$, which is a contradiction. Hence, in this case one can take $Z:=V_3$.

\textbf{Case b.2).} If $\left<\mu_1,V_3\right>\neq0.$ We define
$$\tilde{V}:=V_1-\frac{\left<\mu_1,V_1\right>}{\left<\mu_1,V_3\right>}V_3.$$Obviously, we have $\left<\mu_1,\tilde{V}\right>=0$. Observe that $\tilde{V}(0)+\tilde{V}(1)\neq0$. Indeed, otherwise, set
$$\mathbf{V}:=\tilde{V}+\hat{\tilde{V}}.$$  We have $\mathbf{V}\in\mathcal{H}\cap\mathcal{S}$ and $\mathbf{V}(0)=0$. From above we know that this implies $\mathbf{V}\equiv0.$ Hence, $\tilde{V}\in\mathcal{H}\cap\mathcal{AS}$. Clearly seen, $\tilde{V}\not\equiv0$, and so, from above we know that necessarily $\tilde{V}$ and $V_2$ are linearly dependent. This means that there exists $\chi\in\mathbb{C}$ such that $\tilde{V}=\chi V_2$, or, equivalently
$$ V_1-\frac{\left<\mu_1,V_1\right>}{\left<\mu_1,V_3\right>}V_3-\chi V_2=0,$$which contradicts the fact that $V_1,V_2,V_3$ form a basis. In conclusion, $\tilde{V}(0)+\tilde{V}(1)\neq0$ and since $\left<\mu_1,\tilde{V}\right>=0$, in this case one can take $Z:=\tilde{V}.$

The next two steps concern the function $\Phi^*$.

\noindent\textbf{Step II.} In this step, we want to show that there exists $\Phi$ such that 
\begin{equation}\label{e150}\left\{\begin{array}{l}\mathcal{E}\Phi+\lambda\Phi=0,\text{ in }(0,1),\\
\Phi'(0)=\Phi'(1)=0 \text{ and }\Phi'''(0)-\Phi'''(1)\neq0,\\
\left<\mu_2,\Phi\right>=0.\end{array}\right.\ \end{equation}(To recall the form of $\mathcal{E}$ see \eqref{e21}.)

The proof is similar with the one in the previous step. That is why, we shall only sketch it. Let us denote by
$$\mathcal{K}:=\left\{ V\in H^4(0,1): \mathcal{E}V+\lambda V=0,\ V'(0)=V'(1)=0 \right\},$$which has dimension equal to three. We claim that there cannot exist nonzero $V_1,V_2\in\mathcal{K}\cap\mathcal{AS}$ which are linearly independent. To see this, we firstly observe that $V_1'''(0)\neq0$ and $V_2'''(0)\neq0$. Indeed, otherwise, one has
$$\mathcal{E}V_1+\lambda V_1=0 \text{ and }V_1'(0)=V_1'(1)=V_1'''(0)=V_1'''(1)=0.$$Scalarly multiplying the above equation by $V_1$ and taking the real part of the result, we get that
$$\rho_0\varepsilon\|V_1''\|^2+[2\rho_0\varepsilon(k^2+l^2)+\gamma]\|V_1'\|^2+[\rho_0\varepsilon(k^2+l^2)^2+\gamma(k^2+l^2)+\Re\lambda]\|V_1\|^2=0.$$Recalling the $\Re\lambda\geq0$, from the above we deduce that $V_1\equiv0$, which is a contradiction. To show the linear dependence, we define
$$\tilde{V}:=V_1-\frac{V_1'''(0)}{V_2'''(0)}V_2,$$and since $\tilde{V}\in\mathcal{K}\cap\mathcal{AS}$ and $\tilde{V}'''(0)=0$, the conclusion follows immediately. The same holds true for $V_1,V_2\in\mathcal{K}\cap\mathcal{S}$. Therefore, as before a basis in $\mathcal{K}$ can be taken of the form $\left\{V_1,V_2,V_3\right\}$ with $V_1\in\mathcal{S}$, $V_2\in\mathcal{AS}$ and $V_3$ with no symmetry.

\noindent\textbf{Case a).} If $\left<\mu_2,V_2\right>=0.$  From above we know that $V_2'''(0)-V_2'''(1)=2V_2'''(0)\neq0$. Thus, one can take $\Phi:=V_2$.

\noindent\textbf{Case b).} If $\left<\mu_2,V_2\right>\neq0$ and

\textbf{Case b.1).} $\left<\mu_2, V_3\right>=0.$ Then, in this case one can show that $V_3'''(0)-V_3'''(1)\neq0$, because otherwise one can define 
$$\tilde{V}:=V_3+\hat{V}_3$$ for which $\tilde{V}\in\mathcal{K}\cap\mathcal{S}$ and $\tilde{V}'''(0)=0$. This yields that $V_3$ should be antisymmetric, which is false. In this case one can take $\Phi:=V_3$.

\textbf{Case b.2).} If $\left<\mu_2,V_3\right>\neq0,$ then defining 
$$\tilde{V}:=V_2-\frac{\left<\mu_2,V_2\right>}{\left<\mu_2,V_3\right>}V_3$$we get that $\left<\mu_2,\tilde{V}\right>=0$ and $\tilde{V}'''(0)-\tilde{V}'''(1)\neq0$, since otherwise $\tilde{V}$ should be antisymmetric. Note that if $\tilde{V}$ is  antisymmetric, then from above we know that necessarily $\tilde{V}$ and $V_2$ are linearly dependent. So, there exists $\chi\in\mathbb{C}$ such that $\tilde{V}=\chi V_2$, or, equivalently
$$(1-\chi)V_2= \frac{\left<\mu_2,V_2\right>}{\left<\mu_2,V_3\right>}V_3.$$But this implies that $V_3\in\mathcal{AS},$ which is absurd.  Hence, in this case one can take $\Phi=\tilde{V}$.

\noindent\textbf{Step III.} With similar arguments as above, one may show that there exists $\Psi$ such that
\begin{equation}\label{e154}\left\{\begin{array}{l}\mathcal{E}\Psi+\lambda\Psi=0,\\
\Psi'''(0)=\Psi'''(1)=0 \text{ and }\Psi'(0)-\Psi'(1)\neq0,\\
\left<\mu_2,\Psi\right>=0.\end{array}\right.\ \end{equation}

\noindent\textbf{Step IV.} In this step, once we have in hand the functions $Z,\Phi,\Psi$ satisfying \eqref{140}, \eqref{e150} and \eqref{e154}, respectively, we scalarly multiply the first equation in \eqref{e137} by $Z$ and the second equation of \eqref{e137} successively by $\Phi$ and $\Psi$, respectively. It yields that

$$\begin{aligned}& \nu (Z^*)'''(y)\overline{Z(y)}|_0^1+\left<Z^*,\lambda\mathcal{L}Z+\mathcal{F}Z\right>+\left<\mu_1,Z\right>=0,\\&
\left<\mu_2,\Phi\right>+\rho_0\varepsilon\Phi^*(y)\overline{\Phi'''(y)}|_0^1+\left<\Phi^*,\lambda\Phi+\mathcal{E}\Phi\right>=0,\\&
\left<\mu_2,\Psi\right>+\rho_0\varepsilon(\Phi^*)''(y)\overline{\Psi'(y)}|_0^1+\left<\Phi^*,\lambda\Psi+\mathcal{E}\Psi\right>=0.\end{aligned}$$Which in virtue of the symmetry of $Z^*$ and $\Phi^*$, and the fact that $Z,\Phi,\Psi$ satisfy the systems \eqref{140}, \eqref{e150} and \eqref{e154}, respectively, we get that
$$(Z^*)'''(0)[\overline{Z(0)+Z(1)}]=0,\ \Phi^*(0)[\overline{\Phi'''(0)-\Phi'''(1)}]=0 \text{ and }(\Phi^*)''(0)[\overline{\Psi'(0)-\Psi'(1)}]=0,$$ and so
 $(Z^*)'''(0)=\Phi^*(0)=(\Phi^*)''(0)=0.$ 

Now, let us recap, $Z^*$ and $\Phi^*$ satisfy the following fourth order system
\begin{flalign}\label{e155}\left\{\begin{array}{l}\begin{aligned}&  \nu (Z^*)^{iv}-(2 \nu(k^2+l^2)-\text{i}kU+\overline{\lambda})(Z^*)''-2\text{i}kU'(Z^*)'\\&
+[(k^2+l^2)\overline{\lambda}+\nu (k^2+l^2)^2-\text{i}k(k^2+l^2)U]Z^*+\varphi'_\infty\Phi^*=0, \end{aligned}\\
\\
\begin{aligned}&-\varepsilon \kappa(k^2+l^2)L_{kl}(\varphi'_\infty Z^*)-\varepsilon\kappa(k^2+l^2)\varphi'''_\infty Z^*
+\rho_0\varepsilon(\Phi^*)^{iv}-\left[2\rho_0\varepsilon(k^2+l^2)+\gamma\right](\Phi^*)''\\&+\left[\rho_0\varepsilon(k^2+l^2)^2+\gamma(k^2+l^2)+\overline{\lambda}-\text{i}kU\right]\Phi^*=0,\ \text{ a.e. in }(0,1),\end{aligned}\\
\\
 Z^*(0)=(Z^*)'(0)=(Z^*)''(0)=(Z^*)'''(0)=0,\\
(\Phi^*)(0)=(\Phi^*)'(0)=(\Phi^*)''(0)=(\Phi^*)'''(0)=0.\end{array}\right.\ \end{flalign}
It is obvious that, due to the fact that $(Z^*,\Phi^*)$ is analytic, the null boundary conditions in \eqref{e155} imply that $(Z^*,\Phi^*)\equiv(0,0),$ which is absurd.

To conclude with the proof, we recall that we showed that if $(Z^*,\Phi^*)$ is an eigenvector corresponding to an unstable eigenvalue, then necessarily either $(Z^*)''(0)\neq0$ or $(Z^*)''(1)\neq0$. Since there are only a finite number of unstable eigenvectors, it yields the existence of some $a\in\mathbb{C}$ such that, each unstable eigenvector satisfies
$$  (Z^*)''(0)+a(Z^*)''(1)\neq0.$$ The proof is complete.

\end{proof}

For the sake of  the simplicity of the presentation we shall strengthen assumption $(H_1)$ by
$$(H_{11})\ \ \text{All the unstable eigenvlues $\lambda_{j}^{kl},\ k^2+l^2\leq M,\ j=1,2,..., N_{kl},$ are simple.}$$
The present algorithm works equally-well in the case of semisimple eigenvalues (see for details
\cite{book}). But, we shall not develop this subject here since the presentation may get too hard to
follow.

 By using (if necessary) the Gram-Schmidt procedure, we may assume that the systems $\left\{\left(Z_j^{kl}, \Phi_j^{kl}\right)\right\}_{j=1}^{N_{kl}}$ and  $\left\{\left(Z_j^{kl*}, \Phi_j^{kl*}\right)\right\}_{j=1}^{N_{kl}}$ are bi-orthonormal, i.e.,
\begin{flalign}\label{e73}\left<\left(Z_i^{kl}, \Phi_i^{kl}\right),\left(Z_j^{kl*}, \Phi_j^{kl*}\right)\right>=\delta_{ij},\end{flalign}where $\delta_{ij}$ stands for the Kronecker delta function. (The preocedure is as follows: we fix the set $\left\lbrace \left(Z_j^{kl*}, \Phi_j^{kl*}\right)\right\rbrace $) guaranteed by Lemma \ref{l2}, then via the Gram-Schmidt procedure we construct the bi-orthogonal set $\left\lbrace \left(Z_i^{kl}, \Phi_i^{kl}\right)\right\rbrace $ .)

At this point, we are able to introduce  the stabilizing feedback  $\mathbf{s}_{kl},\mathbf{t}_{kl}\ 0<k^2+l^2\leq M$. More exactly:
\begin{flalign}\label{e60}\mathbf{s}_{kl}(t):=-\frac{i}{k}\Omega_{kl}(t) \text{ and }\mathbf{t}_{kl}(t)=\overline{a}\frac{\text{i}}{k}\Omega_{kl}(t),\end{flalign}
where $a\in\mathbb{C}$ is given by Lemma \ref{l2}, and  $\Omega_{kl}=\Omega_{kl}(t,v,\varphi)$ has the form
\begin{flalign}\label{e61}\begin{aligned}&\Omega_{kl}(t):=\\&
\left<\Lambda_{sum}^{kl}\mathbf{R}_{kl}\left(\begin{array}{c}\int_\mathcal{O}\left[-v_{yy}+(k^2+l^2)v\right]\overline{Z_1^{kl*}}+\varphi\overline{\Phi_1^{kl*}}]e^{-\text{i}kx}e^{-\text{i}lz}dxdydz\\
\int_\mathcal{O}\left[-v_{yy}+(k^2+l^2)v\right]\overline{Z_2^{kl*}}+\varphi\overline{\Phi_2^{kl*}}]e^{-\text{i}kx}e^{-\text{i}lz}dxdydz\\
\ddots\\
\int_\mathcal{O}\left[-v_{yy}+(k^2+l^2)v\right]\overline{Z_{N_{kl}}^{kl*}}+\varphi\overline{\Phi_{N_{kl}}^{kl*}}]e^{-\text{i}kx}e^{-\text{i}lz}dxdydz\end{array}\right),\left(\begin{array}{c}(Z_{1}^{kl*})''(0)+a(Z_1^{kl*})''(1)\\ (Z_{2}^{kl*})''(0)+a(Z_2^{kl*})''(1)\\ \ddots \\ (Z_{N_{kl}}^{kl*})''(0)+a(Z_{N_{kl}}^{kl*})''(1)\end{array}\right)\right>_{N_{kl}}\\&
=\left<\Lambda_{sum}^{kl}\mathbf{R}_{kl}\left(\begin{array}{c}\left<(\mathcal{L}_{kl}v_{kl},\varphi_{kl}),(Z_1^{kl*},\Phi_1^{kl*})\right>\\\left<(\mathcal{L}_{kl}v_{kl},\varphi_{kl}),(Z_2^{kl*},\Phi_2^{kl*})\right>\\
\ddots\\
\left<(\mathcal{L}_{kl}v_{kl},\varphi_{kl}),(Z_{N_{kl}}^{kl*},\Phi_{N_{kl}}^{kl*})\right>\end{array}\right),\left(\begin{array}{c}(Z_{1}^{kl*})''(0)+a(Z_1^{kl*})''(1)\\ (Z_{2}^{kl*})''(0)+a(Z_2^{kl*})''(1)\\ \ddots \\ (Z_{N_{kl}}^{kl*})''(0)+a(Z_{N_{kl}}^{kl*})''(1)\end{array}\right)\right>_{N_{kl}}\end{aligned},&&\end{flalign}with $\Lambda_{sum}^{kl}:=\overline{\Lambda_{\gamma_1^{kl}}^{kl}+...+\Lambda_{\gamma^{kl}_{N_{kl}}}^{kl}},$ for $\Lambda_{\gamma_i^{kl}}^{kl}$ the following $N_{kl}$ diagonal matrices
\begin{flalign}\label{e63}\Lambda_{\gamma_i^{kl}}^{kl}:=diag\left(\frac{1}{\gamma_i^{kl}+\lambda_1^{kl}},\frac{1}{\gamma_i^{kl}+\lambda_2^{kl}}, ... ,\frac{1}{\gamma_i^{kl}+\lambda_{N_{kl}}^{kl}}  \right),\ i=1,2,...,N_{kl},\end{flalign}for some $0<\gamma_1^{kl}<...<\gamma_{N_{kl}}^{kl}$, $N_{kl}$ real constants, sufficiently large such as relation \eqref{e76} below holds true. Moreover,
 \begin{flalign}\label{e66}\mathbf{R}_{kl}:=(R_1^{kl}+R_2^{kl}+...+R_{N_{kl}}^{kl})^{-1},\end{flalign}where
\begin{flalign}\label{e67}R_i^{kl}:=\overline{\Lambda_{\gamma_i^{kl}}^{kl}}R_{kl}\Lambda_{\gamma_i^{kl}}^{kl},\ i=1,2,...,N_{kl}.\end{flalign} Here, $R_{kl}$ stands for the square matrix of order $N_{kl}$ of the form
\begin{equation}\label{e200}R_{kl}:=\left(\begin{array}{cccc}l_1^{kl}\overline{l_1^{kl}}& l_1^{kl}\overline{l_2^{kl}}& ...& l_1^{kl}\overline{l_{N_{kl}}^{kl}}\\
l_2^{kl}\overline{l_1^{kl}}& l_2^{kl}\overline{l_2^{kl}}& ...& l_2^{kl}\overline{l_{N_{kl}}^{kl}}\\
\ddots& \ddots &\ddots&\ddots\\
l_{N_{kl}}^{kl}\overline{l_1^{kl}}& l_{N_{kl}}^{kl}\overline{l_2^{kl}}& ...& l_{N_{kl}}^{kl}\overline{l_{N_{kl}}^{kl}}\end{array}\right),\end{equation}where
$$l_j^{kl}:=(Z_{j}^{kl*})''(0)+a(Z_j^{kl*})''(1),\ j=1,2,...,N_{kl}.$$ Recall that, by Lemma \ref{l2}, $l_j^{kl}\neq0$, for all $j$. Hence, with similar arguments as in \cite[Lemma 5.2]{ion2}, one may show that the sum $R_1^{kl}+R_2^{kl}+...+R_{N_{kl}}^{kl}$ is indeed an invertible matrix.  We recall that $\left<\cdot,\cdot\right>_{N}$ stands for the classical euclidean scalar product in $\mathbb{C}^N$.

We plug controllers $\mathbf{s}_{kl},\mathbf{t}_{kl}$, given by \eqref{e60}, into system \eqref{e12} and show that they achieve the exponential asymptotic stability of it. So, our aim  now is to show the asymptotic exponential stability of the following closed-loop system
\begin{flalign}\label{e79}\left\{\begin{array}{l}(\mathcal{L}_{kl}v_{kl})_t+\mathcal{F}_{kl}v_{kl}-\varepsilon\kappa(k^2+l^2)\phi_\infty'\mathcal{L}_{kl}\varphi_{kl}-\varepsilon\kappa(k^2+l^2)\varphi'''_\infty\varphi_{kl}=0,\\
(\varphi_{kl})_t+\phi'_\infty v_{kl}+\mathcal{E}_{kl}\varphi_{kl}=0,\ \text{a.e. in } (0,1),\\
 v_{kl}(0)=v_{kl}(1)=0,\\
v'_{kl}(0)=-\Omega_{kl}(t,v_{kl},\varphi_{kl}),v'_{kl}(1)=\overline{a}\Omega_{kl}(t,v_{kl},\varphi_{kl}),\\
\varphi'_{kl}(0)=\varphi'_{kl}(1)=\varphi'''_{kl}(0)=\varphi'''_{kl}(1)=0,
\end{array}\right.\ &&\end{flalign}$0<k^2+l^2\leq M$, where $\Omega_{kl}$ is described in \eqref{e61}. To this end, we equivalently rewrite the feedback $\Omega_{kl}$ as the sum 
$$\Omega_{kl}= \psi^{kl}_1+\psi^{kl}_2+...+\psi^{kl}_{N_{kl}},$$ where
\begin{flalign}\label{1hum}\psi^{kl}_i(t):=\left<\mathbf{R}_{kl}\left(\begin{array}{c}\left<(\mathcal{L}_{kl}v_{kl}(t),\varphi_{kl}),(Z^{kl*}_1,\Phi_1^{kl*})\right>\smallskip\\
\left<(\mathcal{L}_{kl}v_{kl}(t),\varphi_{kl}),(Z^{kl*}_2,\Phi_2^{kl*})\right>\\\multicolumn{1}{c}{.......................}\\ \left<(\mathcal{L}_{kl}v_{kl}(t),\varphi_{kl}),(Z^{kl*}_{N_{kl}},\Phi_{N_{kl}}^{kl*})\right>\end{array}\right),\left(\begin{array}{c}\frac{1}{\gamma^{kl}_i+\lambda^{kl}_1}\left[(Z_{1}^{kl*})''(0)+a(Z_1^{kl*})''(1)\right]\smallskip\\ \frac{1}{\gamma^{kl}_i+\lambda^{kl}_2}\left[(Z_{2}^{kl*})''(0)+a(Z_2^{kl*})''(1)\right]\\ \multicolumn{1}{c}{.......................} \\\frac{1}{\gamma^{kl}_i+\lambda^{kl}_{N_{kl}}}\left[(Z_{N_{kl}}^{kl*})''(0)+a(Z_{N_{kl}}^{kl*})''(1)\right]\end{array}\right)\right>_{N_{kl}}
,\ t\geq0,
&&\end{flalign} for $i=1,2,...,N_{kl}$. 

We continue to follow the ideas in \cite{book}. More exactly, we shall lift the boundary control, $\Omega_{kl}$, into the equations, via the lifting operators $\mathbb{D}_{\gamma_i^{kl}}(\psi):=(V,\Phi)$, where $(V,\Phi)$ is solution to
\begin{flalign}\label{e71}\left\{\begin{array}{l}\mathcal{F}_{kl}V-\varepsilon\kappa(k^2+l^2)\varphi'_\infty\mathcal{L}_{kl}\Phi-\varepsilon\kappa(k^2+l^2)\varphi'''_\infty\Phi+\displaystyle 2\sum_{j=1}^{N_{kl}}\lambda_j^{kl}\left<\mathcal{L}_{kl}V,Z_j^{kl*}\right>Z_j^{kl}+\gamma_i^{kl}\mathcal{L}_{kl}V=0\\
\varphi'_\infty V+\mathcal{E}_{kl}\Phi+\displaystyle 2\sum_{j=1}^{N_{kl}}\lambda_j^{kl}\left<\Phi,\Phi_j^{kl*}\right>\Phi_j^{kl}+\gamma_i^{kl}\Phi=0,\text{ a.e. in }(0,1),\\
V(0)= V(1)=0, V'(0)=-\psi, V'(1)=\overline{a}\psi,\\
\Phi'(0)=\Phi'(1)=\Phi'''(0)=\Phi'''(1)=0, \ 1\leq i\leq N_{kl}.\end{array}\right.\ &&\end{flalign} Here, $\psi\in\mathbb{C}$ and  \begin{flalign}\label{e76}\begin{aligned}& 0<\gamma_1^{kl}<\gamma_2^{kl}<...<\gamma_{N_{kl}}^{kl} \textit{ are chosen sufficiently large such that }\\&
\textit{ equation \eqref{e71} is well-posed, with }(V,\Phi)\in H^\frac{3}{2}(0,1)\times H^\frac{3}{2}(0,1).\end{aligned}\end{flalign}
We shall denote by 
$$\mathbb{D}^1_{\gamma_i^{kl}}(\psi):=V \text{ and by }\mathbb{D}^2_{\gamma_i^{kl}}(\psi):=\Phi.$$For latter purpose, let us compute the scalar product 
$$\left<(\mathcal{L}_{kl}\mathbb{D}^1_{\gamma_i^{kl}}(\psi),\mathbb{D}^2_{\gamma_i^{kl}}(\psi)),(Z_j^{kl*},\Phi_j^{kl*})\right>,$$ for all $1\leq i,j\leq N_{kl}$. We have by scalarly multiplying system \eqref{e71} by $(Z_j^{kl*},\Phi_j^{kl*})$ and using the biorthogonality \eqref{e73},  that
$$\begin{aligned}  & 0=\psi\overline{(Z_j^{kl*})''(0)+a(Z_{j}^{kl*})''(1)}+\left<\mathcal{L}_{kl}\mathbb{D}^1_{\gamma_i^{kl}}(\psi),\mathcal{L}_{kl}^{-1}\mathcal{F}_{kl}Z_j^{kl*}\right>+\left<\mathbb{D}^2_{\gamma_i^{kl}}(\psi),-\varepsilon\kappa(k^2+l^2)\mathcal{L}_{kl}(\varphi'_\infty Z_j^{kl*})\right>\\&
+2\lambda_j^{kl}\left<\mathcal{L}_{kl}\mathbb{D}^1_{\gamma_i^{kl}}(\psi),Z_j^{kl*}\right>+\gamma^{kl}_i\left<\mathcal{L}_{kl}\mathbb{D}^1_{\gamma_i^{kl}}(\psi),Z_j^{kl*}\right>+\left<\mathcal{L}_{kl}\mathbb{D}^1_{\gamma_i^{kl}}(\psi),\mathcal{L}_{kl}^{-1}(\varphi'_\infty\Phi_j^{kl*})\right>\\&
-\left<\mathcal{L}_{kl}\mathbb{D}^1_{\gamma_i^{kl}}(\psi),\varepsilon\kappa(k^2+l^2)\varphi'''_\infty\Phi_j^{kl*}\right>\\&
+\left<\mathbb{D}^2_{\gamma_i^{kl}}(\psi),\mathcal{E}^*_{kl}\Phi_j^{kl*}\right>+2\lambda_j^{kl}\left<\mathbb{D}^2_{\gamma_i^{kl}}(\psi),\Phi_j^{kl*}\right>+\gamma_i^{kl}\left<\mathbb{D}^2_{\gamma_i^{kl}}(\psi),\Phi_j^{kl*}\right>\\&
=\psi\overline{(Z_j^{kl*})''(0)+a(Z_{j}^{kl*})''(1)} +(2\lambda_j^{kl}+\gamma_i^{kl})\left<(\mathcal{L}_{kl}\mathbb{D}^1_{\gamma_i^{kl}}(\psi),\mathbb{D}^2_{\gamma_i^{kl}}(\psi)),(Z_j^{kl*},\Phi_j^{kl*})\right>\\&
+\left<(\mathcal{L}_{kl}\mathbb{D}^1_{\gamma_i^{kl}}(\psi),\mathbb{D}^2_{\gamma_i^{kl}}(\psi)),\mathbf{A}^*_{kl}(Z_j^{kl*},\Phi_j^{kl*})\right>.\end{aligned}$$Recalling that $\mathbf{A}_{kl}^*(Z_j^{kl*},\Phi_j^{kl*})=-\overline{\lambda_j^{kl}}(Z_j^{kl*},\Phi_j^{kl*})$, in virtue of \eqref{e40} and \eqref{e74}, we deduce from above that
\begin{flalign}\label{e75}\left<(\mathcal{L}_{kl}\mathbb{D}^1_{\gamma_i^{kl}}(\psi),\mathbb{D}^2_{\gamma_i^{kl}}(\psi)),(Z_j^{kl*},\Phi_j^{kl*})\right>=-\frac{\psi}{\lambda_j^{kl}+\gamma_i^{kl}}\overline{Z_{j}^{kl*})''(0)+a(Z_j^{kl*})''(1)}\neq0.,\ 1\leq i,j\leq N_{kl}.\end{flalign}

Now, recalling the definition of $\psi_i^{kl}$ in \eqref{1hum} and using \eqref{e75}, we deduce as in \cite[Eq. (4.4)]{ion2} that
\begin{flalign}\label{8hum}\begin{aligned}&\left(\begin{array}{c}\left<(\mathcal{L}_{kl}\mathbb{D}^1_{\gamma^{kl}_i}(\psi_i^{kl}),\mathbb{D}^2_{\gamma^{kl}_i}(\psi_i^{kl})),(Z_1^{kl*},\Phi_1^{kl*})\right>\\ \left<(\mathcal{L}_{kl}\mathbb{D}^1_{\gamma^{kl}_i}(\psi_i^{kl}),\mathbb{D}^2_{\gamma^{kl}_i}(\psi_i^{kl})),(Z_2^{kl*},\Phi_2^{kl*})\right>\\\multicolumn{1}{c}{.......................}\\\left<(\mathcal{L}_{kl}\mathbb{D}^1_{\gamma^{kl}_i}(\psi_i^{kl}),\mathbb{D}^2_{\gamma^{kl}_i}(\psi_i^{kl})),(Z_{N_{kl}}^{kl*},\Phi_{N_{kl}}^{kl*})\right>\end{array}\right)
=-R_i^{kl}\mathbf{R}_{kl}\left(\begin{array}{c}\left<(\mathcal{L}_{kl}v_{kl}(t),\varphi_{kl}),(Z^{kl*}_1,\Phi_1^{kl*})\right>\smallskip\\
\left<(\mathcal{L}_{kl}v_{kl}(t),\varphi_{kl}),(Z^{kl*}_2,\Phi_2^{kl*})\right>\\\multicolumn{1}{c}{.......................}\\ \left<(\mathcal{L}_{kl}v_{kl}(t),\varphi_{kl}),(Z^{kl*}_{N_{kl}},\Phi_{N_{kl}}^{kl})\right>\end{array}\right),\end{aligned}&&\end{flalign}  for $i=1,...,N_{kl}$. 

In equation \eqref{e79}, we denote by 
$$\left(\begin{array}{c}Z \\ \Phi \end{array}\right):=\left(\begin{array}{c}L_{kl}\left[v_{kl}-\displaystyle \sum_{i=1}^{N_{kl}}\mathbb{D}^1_{\gamma^{kl}_i}(\psi_i^{kl})\right] \\ \varphi_{kl}- \displaystyle \sum_{i=1}^{N_{kl}}\mathbb{D}^2_{\gamma^{kl}_i}(\psi_i^{kl})\end{array}\right).$$ Then, subtracting \eqref{e71}  and \eqref{e79}, we deduce that
\begin{flalign}\label{e12hum}
\begin{aligned}
\left(\begin{array}{c}Z \\ \Phi\end{array}\right)_t&=\displaystyle-\mathbf{A}_{kl}\left(\begin{array}{c}Z \\ \Phi\end{array}\right)  +2\sum_{i,j=1}^{N_{kl}}\lambda^{kl}_j\left<\left(\begin{array}{c}\mathcal{L}_{kl}\mathbb{D}^1_{\gamma^{kl}_i}(\psi^{kl}_i) \\ \mathbb{D}^2_{\gamma^{kl}_i}(\psi^{kl}_i)\end{array}\right),\left(\begin{array}{c}Z_j^{kl*} \\ \Phi_j^{kl*}\end{array}\right)\right>\left(\begin{array}{c}Z_j^{kl} \\ \Phi_j^{kl}\end{array}\right)\\&
+\sum_{i=1}^{N_{kl}}\gamma^{kl}_i\left(\begin{array}{c}\mathcal{L}_{kl}\mathbb{D}^1_{\gamma^{kl}_i}(\psi^{kl}_i) \\ \mathbb{D}^2_{\gamma^{kl}_i}(\psi^{kl}_i)\end{array}\right)-\sum_{i=1}^{N_{kl}}\left(\begin{array}{c}\mathcal{L}_{kl}\mathbb{D}^1_{\gamma^{kl}_i}(\psi^{kl}_i) \\ \mathbb{D}^2_{\gamma^{kl}_i}(\psi^{kl}_i)\end{array}\right)_t.\end{aligned}&&\end{flalign}
The perturbation of $-\mathbf{A}_{kl}$ can be written in terms of $(Z,\Phi)$, since, similarly as in \cite[Eq. (4.7)]{ion2},  one can show that the feedback laws $\psi_i^{kl},\ i=1,..,N_{kl},$ can be equivalently rewritten as
\begin{flalign}\label{e80}\psi^{kl}_i(t):=\frac{1}{2}\left<\mathbf{R}_{kl}\left(\begin{array}{c}\left<(Z(t),\Phi(t)),(Z^{kl*}_1,\Phi_1^{kl*})\right>\smallskip\\
\left<(Z(t),\Phi(t)),(Z^{kl*}_2,\Phi_2^{kl*})\right>\\\multicolumn{1}{c}{.......................}\\ \left<(Z(t),\Phi(t)),(Z^{kl*}_{N_{kl}},\Phi_{N_{kl}}^{kl*})\right>\end{array}\right),\left(\begin{array}{c}\frac{1}{\gamma^{kl}_i+\lambda^{kl}_1}\left[(Z_{1}^{kl*})''(0)+a(Z_1^{kl*})''(1)\right]\smallskip\\ \frac{1}{\gamma^{kl}_i+\lambda^{kl}_2}\left[(Z_{2}^{kl*})''(0)+a(Z_2^{kl*})''(1)\right]\\ \multicolumn{1}{c}{.......................} \\\frac{1}{\gamma^{kl}_i+\lambda^{kl}_{N_{kl}}}\left[(Z_{N_{kl}}^{kl*})''(0)+a(Z_{N_{kl}}^{kl*})''(1)\right]\end{array}\right)\right>_{N_{kl}}
,\ t\geq0.
&&\end{flalign}  Moreover, likewise in \cite[Eq. (4.8)]{ion2}, we have now
\begin{flalign}\label{e81}\begin{aligned}&\left(\begin{array}{c}\left<(\mathcal{L}_{kl}\mathbb{D}^1_{\gamma^{kl}_i}(\psi_i^{kl}),\mathbb{D}^2_{\gamma^{kl}_i}(\psi_i^{kl})),(Z_1^{kl*},\Phi_1^{kl*})\right>\\ \left<(\mathcal{L}_{kl}\mathbb{D}^1_{\gamma^{kl}_i}(\psi_i^{kl}),\mathbb{D}^2_{\gamma^{kl}_i}(\psi_i^{kl})),(Z_2^{kl*},\Phi_2^{kl*})\right>\\\multicolumn{1}{c}{.......................}\\\left<(\mathcal{L}_{kl}\mathbb{D}^1_{\gamma^{kl}_i}(\psi_i^{kl}),\mathbb{D}^2_{\gamma^{kl}_i}(\psi_i^{kl})),(Z_{N_{kl}}^{kl*},\Phi_{N_{kl}}^{kl*})\right>\end{array}\right)
=-\frac{1}{2}R_i^{kl}\mathbf{R}_{kl}\left(\begin{array}{c}\left<(Z(t),\Phi(t)),(Z^{kl*}_1,\Phi_1^{kl*})\right>\smallskip\\
\left<(Z(t),\Phi(t)),(Z^{kl*}_2,\Phi_2^{kl*})\right>\\\multicolumn{1}{c}{.......................}\\ \left<(Z(t),\Phi(t)),(Z^{kl*}_{N_{kl}},\Phi_{N_{kl}}^{kl})\right>\end{array}\right),\end{aligned}&&\end{flalign}for $i=1,...,N_{kl}$.

Next, we decompose system  (\ref{e12hum}) into its stable and unstable part. To this end, we introduce  the projections $P_N$, and its adjoint $P_N^*$, defined by
 $$P_N:=\frac{1}{2\pi \text{i}} \int_{\Gamma}{(\lambda I+\mathbf{A}_{kl})^{-1}d\lambda};\  P_N^*:=\frac{1}{2\pi\text{i}} \int_{\bar{\Gamma}}{(\lambda I+\mathbf{A}_{kl}^*)^{-1}d\lambda},$$
 where $\Gamma$ (its conjugate $\bar{\Gamma}$, respectively) separates the unstable spectrum from the stable one of
 $-\mathbf{A}_k$
 ($-\mathbf{A}_k^*$, respectively). We set
 \begin{flalign}\label{f12} -\mathbf{A}_{N}^u:=P_{N}(-\mathbf{A}_{kl}),\ -\mathbf{A}_{N}^s:=(I-P_{N})(-\mathbf{A}_{kl}),
 \end{flalign}
 for the restrictions of $-\mathbf{A}_{kl}$,
 respectively. Then, the system (\ref{e12hum}) can accordingly be decomposed as
$$\begin{aligned} \left(\begin{array}{c}Z\\ \Phi\end{array}\right)&=P_N\left(\begin{array}{c}Z\\ \Phi\end{array}\right)+(I-P_N)\left(\begin{array}{c}Z\\ \Phi\end{array}\right)\\&
=:\left(\begin{array}{c} Z_N^u\\ \Phi_N^u\end{array}\right)+\left(\begin{array}{c}Z_N^s\\ \Phi_N^s\end{array}\right)\end{aligned}$$
  where applying $P_{N}$ and $(I-P_{N})$ on (\ref{e12hum}), we obtain
 \begin{flalign}\label{f100} \begin{aligned}  &\frac{d}{dt}\left(\begin{array}{c} Z_N^u\\ \Phi_N^u\end{array}\right)+\mathbf{A}^u_{N}\left(\begin{array}{c} Z_N^u \\ \Phi_N^u\end{array}\right)
=P_{N}\left\{2\sum_{i,j=1}^{N_{kl}}\lambda^{kl}_j\left<\left(\begin{array}{c}\mathcal{L}_{kl}\mathbb{D}^1_{\gamma^{kl}_i}(\psi^{kl}_i) \\ \mathbb{D}^2_{\gamma^{kl}_i}(\psi^{kl}_i)\end{array}\right),\left(\begin{array}{c}Z_j^{kl*} \\ \Phi_j^{kl*}\end{array}\right)\right>\left(\begin{array}{c}Z_j^{kl} \\ \Phi_j^{kl}\end{array}\right)\right.\ \\& \left.\ 
+\sum_{i=1}^{N_{kl}}\gamma^{kl}_i\left(\begin{array}{c}\mathcal{L}_{kl}\mathbb{D}^1_{\gamma^{kl}_i}(\psi^{kl}_i) \\ \mathbb{D}^2_{\gamma^{kl}_i}(\psi^{kl}_i)\end{array}\right)-\sum_{i=1}^{N_{kl}}\left(\begin{array}{c}\mathcal{L}_{kl}\mathbb{D}^1_{\gamma^{kl}_i}(\psi^{kl}_i) \\ \mathbb{D}^2_{\gamma^{kl}_i}(\psi^{kl}_i)\end{array}\right)_t \right\}\end{aligned}&&\end{flalign}
and
\begin{flalign}\label{f101}\begin{aligned} &\frac{d}{dt}\left(\begin{array}{c} Z_N^s\\ \Phi_N^s\end{array}\right)+\mathbf{A}^s_{N}\left(\begin{array}{c} Z_N^s \\ \Phi_N^s\end{array}\right)
=(I-P_{N})\left\{2\sum_{i,j=1}^{N_{kl}}\lambda^{kl}_j\left<\left(\begin{array}{c}\mathcal{L}_{kl}\mathbb{D}^1_{\gamma^{kl}_i}(\psi^{kl}_i) \\ \mathbb{D}^2_{\gamma^{kl}_i}(\psi^{kl}_i)\end{array}\right),\left(\begin{array}{c}Z_j^{kl*} \\ \Phi_j^{kl*}\end{array}\right)\right>\left(\begin{array}{c}Z_j^{kl} \\ \Phi_j^{kl}\end{array}\right)\right.\ \\& \left.\ 
+\sum_{i=1}^{N_{kl}}\gamma^{kl}_i\left(\begin{array}{c}\mathcal{L}_{kl}\mathbb{D}^1_{\gamma^{kl}_i}(\psi^{kl}_i) \\ \mathbb{D}^2_{\gamma^{kl}_i}(\psi^{kl}_i)\end{array}\right)-\sum_{i=1}^{N_{kl}}\left(\begin{array}{c}\mathcal{L}_{kl}\mathbb{D}^1_{\gamma^{kl}_i}(\psi^{kl}_i) \\ \mathbb{D}^2_{\gamma^{kl}_i}(\psi^{kl}_i)\end{array}\right)_t \right\}
 \end{aligned}&&\end{flalign}
 respectively.

 Let us decompose $\left(\begin{array}{c} Z_N^u, \Phi_N^u\end{array}\right)$ as
 $$\left(\begin{array}{c} Z_N^u\\ \Phi_N^u\end{array}\right)(t)=\sum_{j=1}^{N_{kl}}\left<\left(\begin{array}{c} Z_N^u\\ \Phi_N^u\end{array}\right)(t),\left(\begin{array}{c}Z_j^{kl*}\\ \Phi_j^{kl*}\end{array}\right)\right> \left(\begin{array}{c}Z_j^{kl}\\ \Phi_j^{kl}\end{array}\right).$$
  We insert this $\left(\begin{array}{c} Z_N^u\\ \Phi_N^u\end{array}\right)$ in equation (\ref{f100}). Then, we scalarly multiply \eqref{f100} successively by $\left(\begin{array}{c}Z_j^{kl*}\\ \Phi_j^{kl*}\end{array}\right),$ $j=1,...,N_{kl}$, take account of the bi-orthogonality of the eigenfunctions systems, notice that we may assume that 
	$$P_{N}^*\left(\begin{array}{c}Z_j^{kl*}\\ \Phi_j^{kl*}\end{array}\right)=\left(\begin{array}{c}Z_j^{kl*}\\ \Phi_j^{kl*}\end{array}\right),$$ since $P^*_{N}$ is idempotent; and take advantage of relation (\ref{e81}), to get that
	\begin{flalign}\label{e44ee}\mathcal{Z}_t=-\gamma^{kl}_1\mathcal{Z}+\sum_{i=2}^{N_{kl}}(\gamma^{kl}_1-\gamma^{kl}_i)R^{kl}_i\mathbf{R}_{kl}\mathcal{Z},\ t\geq0,\end{flalign}
	where $\mathcal{Z}$ is the vector containing the first $N_{kl}$ modes, i.e.
	$$\mathcal{Z}:=\left(\begin{array}{c}\left<\left(\begin{array}{c} Z_N^u, \Phi_N^u\end{array}\right)(t),\left(\begin{array}{c}Z_1^{kl*}, \Phi_1^{kl*}\end{array}\right)\right>\\ \left<\left(\begin{array}{c} Z_N^u, \Phi_N^u\end{array}\right)(t),\left(\begin{array}{c}Z_2^{kl*}, \Phi_2^{kl*}\end{array}\right)\right> \\ \multicolumn{1}{c}{.................}\\ \left<\left(\begin{array}{c} Z_N^u, \Phi_N^u\end{array}\right)(t),\left(\begin{array}{c}Z_{N_{kl}}^{kl*}, \Phi_{N_{kl}}^{kl*}\end{array}\right)\right>\end{array}\right)$$ and $\Lambda^k:=\text{diag }(\lambda^{kl}_1,\lambda^{kl}_2,...,\lambda^{kl}_{N_{kl}}).$  Scalarly multiplying in $\mathbb{C}^{N_{kl}}$ equation \eqref{e44ee} by $\mathbf{R}_{kl}\mathcal{Z}$, and taking into account that $R_i^{kl}$ are positive definite, we deduce that
	$$\|\mathcal{Z}(t)\|_{N_{kl}}\leq Ce^{-\eta t}\|\mathcal{Z}(0)\|_{N_{kl}}, \forall t\geq0,$$ for some $C,\eta$ independent of $k$ and $l$. (For more details see \cite[Eqs. (4.9)-(4.15)]{ion2}.) Here, $\| \cdot\|_{N_{kl}}$ stands for the euclidean norm in $\mathbb{C}^{N_{kl}}$. The above relation says that the unstable part of the system (\ref{e12hum}) is, in fact, stable. Furthermore,  one  can  argue likewise in the end of the proof of \cite[ Theorem 2.1]{ion2}, in order to deduce  that the system (\ref{f101}) is stable as-well, and to conclude that
	once the feedback $\mathbf{s}_{kl}=\mathbf{t}_{kl}=0$  for $k^2+l^2>M$, and $\mathbf{s}_{kl}=-\frac{\text{i}}{k}\Omega_{kl}$ and $\mathbf{t}_{kl}=\overline{a}\frac{\text{i}}{k}\Omega_{kl}$, for $k^2+l^2\leq M,\ k,l\in\mathbb{Z}\setminus\left\{0\right\}$ is inserted into the system \eqref{e11}, it yields that the corresponding solution of the closed loop equation \eqref{e11}  satisfies for some $C_1,\eta_1>0$, independent of $k$ or $l$
	\begin{equation}\label{e110}\|(u_{kl}(t),v_{kl}(t),w_{kl}(t),\varphi_{kl}(t))\|^2\leq C_1e^{-\eta_1 t}\|(u_{kl}(0),v_{kl}(0),w_{kl}(0),\varphi_{kl}(0))\|^2,\ \forall t\geq0,\end{equation} for all $k,l\in\mathbb{Z}\setminus\left\{0\right\}.$ The details are omitted.

\noindent \textbf{2. The case $k=l=0$.} Setting $k=l=0$ and $\mathbf{s}_{00}\equiv\mathbf{t}_{00}\equiv 0$ in \eqref{e11}, we get the following system
\begin{flalign}\label{e90}\left\{\begin{array}{l}
(u_{00})_t-\nu u''_{00}+U'v_{00}=0,\\
(v_{00})_t-\nu v''_{00}+p'_{00}=-\varepsilon\kappa\Delta\varphi_\infty\varphi'_{00}-\varepsilon\kappa\varphi'_\infty\varphi''_{00},\\
(w_{00})_t-\nu w''_{00}=0,\\
v'_{00}=0,\\
(\varphi_{00})_t+\varphi'_\infty v_{00}+\rho_0\varepsilon\varphi^{iv}_{00}-\gamma\varphi''_{00}=0, \text{ a.e. in }(0,1),\\
u_{00}(0)=u_{00}(1)=v_{00}(0)=v_{00}(1)=w_{00}(0)=w_{00}(1)=0,\\
\varphi'_{00}(0)=\varphi'_{00}(1)=\varphi'''_{00}(0)=\varphi'''_{00}(1)=0.\end{array} \right.\ &&\end{flalign} 
By the fourth equation in \eqref{e90} and the null boundary conditions for $v_{00}$ we immediately get that $v_{00}\equiv0$. Using this in the first and the third equation of \eqref{e90}, after a convenient scalar multiplication and using the Poincare inequality, we deduce, for some $C>0$, that
$$\frac{1}{2}\frac{d}{dt}\left(\|u_{00}(t)\|^2+\|w_{00}(t)\|^2\right)+C \left(\|u_{00}(t)\|^2+\|w_{00}(t)\|^2\right)\leq 0, \forall t\geq0.$$Hence,
\begin{flalign}\label{e91}\|u_{00}(t)\|^2+\|w_{00}(t)\|^2\leq C_2e^{-\eta_2 t}\left(\|u_{00}(0)\|^2+\|w_{00}(0)\|^2\right), \forall t\geq0, \end{flalign} for some $C_2,\eta_2>0$.

Next, the fifth equation in \eqref{e90} reads as
$$(\varphi_{00})_t+\rho_0\varepsilon\varphi^{iv}_{00}-\gamma\varphi''_{00}=0,$$
$$\varphi'(0)=\varphi'(1)=\varphi'''(0)=\varphi'''(1).$$ Set 
$$\mathbf{A}_{00}\varphi=\rho_0\varepsilon\varphi^{iv}-\gamma\varphi'',$$ for all 
$$\varphi \in \mathcal{D}(\mathbf{A}_{00})=\left\{\varphi\in H^4(0,1):\ \varphi'(0)=\varphi'(1)=\varphi'''(0)=\varphi'''(1)\right\}.$$ Similarly as in Lemma \ref{l1}, one can show that $-\mathbf{A}_{00}$ generates a $C_0-$semigroup in $H$, has compact resolvent, and consequently it has a countable set of eigenvalues, which are semi-simple since $\mathbf{A}_{00}$ is self-adjoint. Let 
$$\lambda\varphi+\mathbf{A}_{00}\varphi=0.$$Scalarly multiplying the above equation by $\varphi$ it yields that
$$\lambda \|\varphi\|^2+\rho_0\varepsilon\|\varphi''\|^2+\gamma\|\varphi'\|^2=0,$$hence, necessarily $\lambda<0$. In other words, the operator $-\mathbf{A}_{00}$ is stable. Therefore, there exists some $C_3,\eta_3>0$ such that
\begin{flalign}\label{e93}\|\varphi_{00}(t)\|^2\leq C_3e^{-\eta_3t}\|\varphi_{00}(0)\|^2,\ \forall t\geq0.\end{flalign}
The conclusion of this section is that \eqref{e91} together with \eqref{e93} imply the existence of some $C_4,\eta_4>0$ such that
\begin{flalign}\label{e94}\|(u_{00}(t),v_{00}(t),w_{00}(t),\varphi_{00}(t))\|^2\leq C_4e^{-\eta_4 t}\|(u_{00}(0),v_{00}(0),w_{00}(0),\varphi_{00}(0))\|^2,\ \forall t\geq0.\end{flalign}

\noindent\textbf{3. The case $k=0$ and $l\neq0$.} System \eqref{e11}, with null boundary control $\mathbf{s}_{0l}\equiv\mathbf{t}_{0l}\equiv0,$ reads as
\begin{flalign}\label{e95}\left\{\begin{array}{l}(u_{0l})_t-\nu u''_{0l}+\nu l^2u_{0l}+U'v_{0l}=0,\\
(v_{0l})_t-\nu v''_{0l}+\nu l^2v_{0l}+p'_{0l}=-\varepsilon\kappa\Delta\varphi_\infty\varphi'_{0l}-\varepsilon\kappa\varphi'_\infty(-l^2\varphi_{0l}+\varphi''_{0l}),\\
(w_{0l})_t-\nu w''_{0l}+\nu l^2w_{0l}+\text{i}lp_{0l}=-\text{i}l\varepsilon\kappa\Delta\varphi_\infty\varphi_{0l},\\
v'_{0l}+\text{i}lw_{0l}=0,\\
(\varphi_{0l})_t+\varphi'_\infty v_{0l}+\rho_0\varepsilon\varphi^{iv}_{0l}-(2\rho_0\varepsilon l^2+\gamma)\varphi''_{0l}+(\rho_0\varepsilon l^4+\gamma l^2)\varphi_{0l}=0, \\
\text{ a.e. in }(0,1),\\
u_{0l}(0)=u_{0l}(1)=v_{0l}(0)=v_{0l}(1)=w_{0l}(0)=w_{0l}(1)=0,\\
\varphi_{0l}'(0)=\varphi'_{0l}(1)=\varphi'''_{0l}(0)=\varphi'''_{0l}(1)=0. \end{array}\right.\ &&\end{flalign}
In a similar manner as we did in the case $k,l\neq0$, we reduce the pressure from the second and the third equation of system \eqref{e95}, and use the fourth relation of it. Then, we couple the result with the fifth equation in system \eqref{e95}. We get that
\begin{flalign}\label{e96}\left\{\begin{array}{l}(-v''_{0l}+l^2v_{0l})_t+\nu v^{iv}_{0l}-2\nu l^2v''_{0l}+\nu l^4v_{0l}=-\varepsilon\kappa\varphi'_\infty l^2(-l^2\varphi_{0l}+\varphi''_{0l})+l^2\varepsilon\kappa\varphi'''_\infty\varphi_{0l},\\
(\varphi_{0l})_t+\varphi'_\infty v_{0l}+\rho_0\varepsilon\varphi^{iv}_{0l}-(2\rho_0\varepsilon l^2+\gamma)\varphi''_{0l}+(\rho_0\varepsilon l^4+\gamma l^2)\varphi_{0l}=0, \\
\text{ a.e. in }(0,1),\\
v_{0l}(0)=v_{0l}(1)=v'_{0l}(0)=v'_{0l}(1)=0,\\
\varphi_{0l}'(0)=\varphi'_{0l}(1)=\varphi'''_{0l}(0)=\varphi'''_{0l}(1)=0.
\end{array}\right.\ &&\end{flalign}   
Denote by $\Phi:=-\varphi''_{0l}+l^2\varphi_{0l}$. Then, the above system can be equivalently written as
\begin{flalign}\label{e97}\left\{\begin{array}{l}(-v''_{0l}+l^2v_{0l})_t+\nu v^{iv}_{0l}-2\nu l^2v''_{0l}+\nu l^4v_{0l}-\varepsilon\kappa\varphi'_\infty l^2\Phi-l^2\varepsilon\kappa\varphi'''_\infty\varphi_{0l}=0,\\
(\overline{\varphi_{0l}})_t+\varphi'_\infty \overline{v_{0l}}-\rho_0\varepsilon \overline{\Phi''}+(\rho_0\varepsilon l^2+\gamma)\overline{\Phi}=0,\text{ a.e. in }(0,1),\\
v_{0l}(0)=v_{0l}(1)=v'_{0l}(0)=v'_{0l}(1)=0,\\
\Phi'(0)=\Phi'(1)=0.
\end{array}\right.\ &&\end{flalign}If we test by $\overline{v_{0l}}$ the first equation in \eqref{e97} and add the result to the second equation in \eqref{e97} tested by $\varepsilon\kappa l^2\Phi$, then take the real part of the result, we arrive at
$$\begin{aligned} &\frac{1}{2}\frac{d}{dt}[\|v'_{0l}\|^2+l^2\|v_{0l}\|^2+\varepsilon\kappa l^2(\|\varphi'_{0l}\|^2+l^2\|\varphi_{0l}\|^2)]+\nu\|v''_{0l}\|^2+2\nu l^2\|v'_{0l}\|^2\\&+\nu l^4 \|v_{0l}\|^2+\rho_0\varepsilon \|\Phi'\|^2+(\rho^2_0+\gamma)\|\Phi\|^2= l^2\varepsilon\kappa\Re\int_0^1\varphi'''_\infty\varphi_{0l}\overline{v_{0l}}dy.\end{aligned}$$ Easily seen, via a proper use of the Young's inequality (as we did several times before), the above implies the existence of some $C_5,\eta_5>0$, independent of $l$, such that
\begin{flalign}\label{e99}\|v_{0l}(t)\|^2+\|v'_{0l}(t)\|^2+\|\varphi_{0l}(t)\|^2\leq C_5 e^{-\eta_5 t}(\|v_{0l}(0)\|^2+\|\varphi_{0l}(0)\|^2),\ \forall t\geq0.&&\end{flalign}Next, we return to the first equation in \eqref{e95}, scalarly multiply it by $u_{0l}$, use the Poincare inequality and relation \eqref{e99}, to get that
\begin{flalign}\label{e100}\|u_{0l}(t)\|^2\leq C_6e^{-\eta_6 t}\|u_{0l}(0)\|^2,\ \forall t\geq0,\end{flalign}
for some $C_6,\eta_6>0$, independent of $l$. Finally, the fourth equation in \eqref{e95} combined with relation \eqref{e99}, then with relation \eqref{e100}, it yields the existence of some $C_7,\eta_7>0$, independent of $l$, such that
\begin{flalign}\label{e101}\|(u_{0l}(t),v_{0l}(t),w_{0l}(t),\varphi_{0l}(t))\|^2\leq C_7e^{-\eta_7 t}\|(u_{0l}(0),v_{0l}(0),w_{0l}(0),\varphi_{0l}(0))\|^2,\ \forall t\geq0.\end{flalign}

\noindent \textbf{4.The case $k\neq 0$ and $l=0$.} This case reduces to the first case as we shall  see at once. System \eqref{e11} reads as
\begin{flalign}\label{e102}\left\{\begin{array}{l}(u_{k0})_t- \nu(-k^2u_{k0}+u''_{k0})+\text{i}kUu_{k0}+U'v_{k0}+\text{i}kp_{k0}=-\text{i}k\varepsilon\kappa\Delta\varphi_\infty\varphi_{k0},\\
(v_{k0})_t- \nu(-l^2v_{k0}+v''_{k0})+\text{i}kUv_{k0}+p'_{k0}=-\varepsilon\kappa\Delta\varphi_\infty\varphi'_{k0}-\varepsilon\kappa\varphi'_\infty(-k^2\varphi_{k0}+\varphi''_{k0}),\\
(w_{k0})_t- \nu(-l^2w_{k0}+w''_{k0})+\text{i}kUw_{k0}=0,\\
\text{i}ku_{k0}+v'_{k0}=0,\\
(\varphi_{k0})_t+\varphi'_\infty v_{k0}+\rho_0\varepsilon\varphi_{k0}^{iv}-\left(\rho_0\varepsilon 2k^2+\gamma\right)\varphi''_{k0}+\left(\rho_0\varepsilon k^4+\gamma k^2+\text{i}kU\right]\varphi_{k0}=0,\\
\text{ a.e. in }(0,1),\\
u_{k0}(0)=\mathbf{s}_{k0},\ u_{k0}(1)=\mathbf{t}_{k0},\ v_{k0}(0)=v_{k0}(1)=w_{k0}(0)=w_{k0}(1)=0,\\
\varphi'_{k0}(0)=\varphi'_{k0}(1)=\varphi'''_{k0}(0)=\varphi'''_{k0}(1)=0.\end{array}\right.\ &&\end{flalign}Let us start with $w_{k0}$. Scalalrly multiplying the third equation in \eqref{e102} by $w_{k0}$, taking the real part of the result, and using the Poincare inequality, we immediately deduce that
\begin{flalign}\label{e103}\|w_{k0}(t)\|^2\leq C_8e^{-\eta_8t}\|w_{k0}(0)\|^2,\ \forall t\geq0,\end{flalign}for some $C_8,\eta_8>0$, independent of $k$. 

Now, let us reduce the pressure from the first two equations in \eqref{e102}, use the fourth equation, then couple the result with the fifth equation in \eqref{e102}. We obtain that
\begin{flalign}\label{ce12}\left\{\begin{array}{l}\begin{aligned} &[-v''_{k0} +k^2v_{k0}]_t + \nu v^{iv}_{k0}-(2 \nu k^2+\text{i}kU)v''_{k0}+ (\nu k^4+\text{i}k^3U+\text{i}kU'')v_{k0}\\&
+k^2\varepsilon\kappa\varphi'_\infty(-k^2\varphi_{k0}+\varphi''_{k0})+k^2\varepsilon\kappa\varphi'''_\infty\varphi_{k0}=0,\end{aligned}\\
(\varphi_{k0})_t+\varphi'_\infty v_{k0}+\rho_0\varepsilon\varphi_{k0}^{iv}-\left(\rho_02\varepsilon k^2+\gamma\right)\varphi''_{k0}+\left(\rho_0\varepsilon k^4+\gamma k^2+\text{i}kU\right)\varphi_{k0}=0,\\
\text{ a.e. in }(0,1),\\
v_{k0}(0)=v_{k0}(1)=0, \ v'_{k0}(0)=-\text{i}k\mathbf{s}_{k0},v'_{k0}(1)=-\text{i}k\mathbf{t}_{k0},\\
\varphi'_{k0}(0)=\varphi'_{k0}(1)=\varphi'''_{k0}(0)=\varphi'''_{k0}(1)=0.\end{array}\right.\ &&\end{flalign}One can see that this system is similar with \eqref{e12}. Thus, we introduce the operators: $L_{k0}:\mathcal{D}(L_{k0})\subset H\rightarrow H,$ $F_{k0}:\mathcal{D}(F_{k0})\subset H\rightarrow H $ and $E_{k0}:\mathcal{D}(E_{kl})\subset H\rightarrow H$, as follows
\begin{flalign}\label{ce13}\begin{aligned}& L_{k0}v:=-v''+k^2v,
\forall v\in \mathcal{D}(L_{k0})=H_0^1(0,1)\cap H^2(0,1);\end{aligned}\end{flalign}
\begin{flalign}\label{ce14}\begin{aligned}& F_{k0}v:= \nu v^{iv}-(2\nu k^2+\text{i}kU)v''+(\nu k^4+\text{i}k^3U+\text{i}kU'')v,\\&
\forall v\in \mathcal{D}(F_{k0})=H_0^2(0,1)\cap H^4(0,1);\end{aligned}\end{flalign}and
\begin{flalign}\label{ce15}\begin{aligned}& E_{k0}\varphi:=\rho_0\varepsilon\varphi^{iv}-\left(2\rho_0\varepsilon k^2+\gamma\right)\varphi''+\left(\rho_0\varepsilon k^4+\gamma k^2+\text{i}kU\right)\varphi,\\&
\forall \varphi\in \mathcal{D}(E_{k0})=\left\{\varphi\in H^4(0,1):\ \varphi'(0)=\varphi'(1)=\varphi'''(0)=\varphi'''(1)=0\right\}.\end{aligned}\end{flalign} Then, we set $\mathbb{A}_{k0}:\mathcal{D}(\mathbb{A}_{k0})\subset H\rightarrow H,$ as
\begin{flalign}\label{ce26}\mathbb{A}_{k0}:=F_{k0}L^{-1}_{k0},\ \mathcal{D}(\mathbb{A}_{k0})=\left\{z\in H:\ L^{-1}_{k0}z\in\mathcal{D}(F_{k0})\right\}.\end{flalign} And finally, define $\mathbf{A}_{k0}:\mathcal{D}(\mathbf{A}_{k0})\subset H\times H\rightarrow H\times H,$ as
\begin{flalign}\label{ce25}\mathbf{A}_{k0}\left(\begin{array}{c}z \\ \varphi\end{array}\right):=\left(\begin{array}{cc}\mathbb{A}_{k0} & -\varepsilon\kappa k^2\varphi'_\infty L_{k0}-k^2\varepsilon\kappa\varphi'''_\infty\\
\varphi'_\infty L_{k0}^{-1} & E_{k0}\end{array}\right)\left(\begin{array}{c}z \\  \varphi\end{array}\right),\end{flalign}for all
$$\left(\begin{array}{c}z \\ \varphi\end{array}\right)\in \mathcal{D}(\mathbf{A}_{k0})=\mathcal{D}(\mathbb{A}_{k0})\times \mathcal{D}(E_{k0}).$$ Of course, one may show that regarding the operators $\mathbf{A}_{k0}$, we have the counterpart of the two Lemmas \ref{l1}, \ref{l2}. More precisely,
\begin{lemma}\label{cl1}The operator $-\mathbf{A}_{k0}$ generates a $C_0-$analytic semigroup on $\mathcal{D}(L_{k0}^{-1})\times H$, and for each $\lambda\in\rho(-\mathbf{A}_{k0})$, $(\lambda I+\mathbf{A}_{k0})^{-1}$ is compact. Moreover, one has for each $\eta>0$ there exists $M>0$, sufficiently large such that
\begin{flalign}\label{ce30}\begin{aligned}&\sigma(-\mathbf{A}_{k0})\subset\left\{\lambda\in\mathbb{C}:\ \Re\lambda\leq -\eta\right\},\forall |k|> M.\end{aligned}\end{flalign}
\end{lemma}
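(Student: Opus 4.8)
The plan is to repeat, mutatis mutandis, the proof of Lemma \ref{l1}, with $k^2+l^2$ replaced throughout by $k^2$ and the standing assumption $k\neq0$ (so that $L_{k0}$ is boundedly invertible on $H$); the block structure \eqref{ce25} of $\mathbf{A}_{k0}$ is the exact analogue of \eqref{e25}, and $L_{k0},F_{k0},E_{k0}$ share the mapping properties of $L_{kl},F_{kl},E_{kl}$. First I would prove the generation and compactness claims. For $\lambda\in\mathbb{C}$ and $(f,g)\in\mathcal{D}(L_{k0}^{-1})\times H$, I write the resolvent equation $\lambda(z,\varphi)+\mathbf{A}_{k0}(z,\varphi)=(f,g)$ in the variables $v:=L_{k0}^{-1}z$ and $\varphi$, as in \eqref{e31}, using $E_{k0}\varphi=-\rho_0\varepsilon(L_{k0}\varphi)''+[\rho_0\varepsilon k^2+\gamma]L_{k0}\varphi+\text{i}kU\varphi$. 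Scalarly multiplying the first equation by $v$ and the second by $L_{k0}\varphi$, taking real parts and adding, the boundedness of $U=-C_U(y^2-y)$ and of all derivatives of $\varphi_\infty$, together with Young's inequality, let one absorb the coupling and lower-order terms into the principal dissipative quantities exactly as in the passage from \eqref{e32} to \eqref{e33}. This yields a bound $\|(L_{k0}^{-1}z,\varphi)\|_{H\times H}\le \frac{C}{\lambda-\omega}\|(f,g)\|_{H\times H}$ for all $\lambda>\omega$, with $C,\omega>0$; Hille--Yosida then gives that $-\mathbf{A}_{k0}$ generates a $C_0-$analytic semigroup on $\mathcal{D}(L_{k0}^{-1})\times H$. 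Compactness of $(\lambda I+\mathbf{A}_{k0})^{-1}$ for $\lambda\in\rho(-\mathbf{A}_{k0})$ follows, as in Lemma \ref{l1}, since $\mathcal{D}(\mathbf{A}_{k0})=\mathcal{D}(\mathbb{A}_{k0})\times\mathcal{D}(E_{k0})$ is compactly embedded into $\mathcal{D}(L_{k0}^{-1})\times H$ by the compact Sobolev embeddings on $(0,1)$; in particular $-\mathbf{A}_{k0}$ has discrete spectrum.

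For the spectral localization \eqref{ce30}, I take $\lambda$ and $(z_{k0},\varphi_{k0})\neq(0,0)$ with $\lambda(z_{k0},\varphi_{k0})+\mathbf{A}_{k0}(z_{k0},\varphi_{k0})=0$, set $v_{k0}:=L_{k0}^{-1}z_{k0}$, and repeat the multiplier computation with $f=g=0$, obtaining the analogue of \eqref{ie40} with $l=0$, namely an inequality of the form
\[
\Re\lambda\big[\|v_{k0}'\|^2+k^2\|v_{k0}\|^2+\|\varphi_{k0}'\|^2+k^2\|\varphi_{k0}\|^2\big]+\big[\nu k^4-C_1k^2\big]\|v_{k0}\|^2+\tfrac{1}{2}\gamma\|L_{k0}\varphi_{k0}\|^2-C_2\|\varphi_{k0}\|^2\le0,
\]
with $C_1,C_2>0$ independent of $k$. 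Then, exactly as at the end of the proof of Lemma \ref{l1}, using $\|L_{k0}\varphi_{k0}\|\ge k^2\|\varphi_{k0}\|$ (a consequence of $\left<L_{k0}\varphi,\varphi\right>=\|\varphi'\|^2+k^2\|\varphi\|^2$ and Cauchy--Schwarz) and assumption $(H_0)$, for $|k|$ larger than a suitable $M=M(\eta)$ the dissipative terms dominate the remaining lower-order contributions, which forces $\Re\lambda\le-\eta$; this is \eqref{ce30}.

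The delicate point — and the place where a computation could slip — is the uniformity in $k$ of the constants $C_1,C_2$: when Young's inequality is applied to the coupling term $\varepsilon\kappa k^2\varphi'_\infty L_{k0}\varphi$, one has to split it so that the portion absorbed against the $v$-dissipation costs only $O(k^2)\|v\|^2$ (dominated by $\nu k^4\|v\|^2$) and the portion absorbed against the $\varphi$-dissipation costs only $O(1)\|L_{k0}\varphi\|^2$ (dominated by $\rho_0\varepsilon k^2\|L_{k0}\varphi\|^2$), exactly as in the derivation of \eqref{e33}; the terms $\varepsilon\kappa k^2\varphi'''_\infty\varphi$ and $\text{i}kU\varphi$ are handled the same way. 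Apart from this bookkeeping there is no new obstacle: the argument is the transcription of the proof of Lemma \ref{l1} to the frequency $l=0$.
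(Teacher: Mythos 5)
Your proposal is correct and coincides with the paper's intent: the paper gives no separate proof of Lemma \ref{cl1}, merely asserting it as the counterpart of Lemma \ref{l1}, and your argument is precisely that transcription with $l=0$ (resolvent estimate via the same multipliers $v$ and $L_{k0}\varphi$, Hille--Yosida, compact resolvent, and the eigenvalue inequality analogous to \eqref{ie40} forcing $\Re\lambda\le-\eta$ for $|k|>M$). The additional bookkeeping you flag (uniformity in $k$ of the Young's-inequality constants and the bound $\|L_{k0}\varphi\|\ge k^2\|\varphi\|$) is exactly what the paper's own computation in \eqref{e32}--\eqref{e33} and \eqref{ie40} uses, so no new ideas are needed.
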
Similarly as in \eqref{e59}, one may show that for $|k|>M$, we have 
\begin{flalign}\label{ce59}\begin{aligned}&\|u_{k0}(t)\|^2+\|v_{k0}(t)\|^2+\|w_{k0}(t)\|^2+\|\varphi_{k0}(t)\|^2\\&\leq C_9e^{-\eta_9t}(\|u_{k0}(0)\|^2+\|v_{k0}(0)\|^2+\|w_{k0}(0)\|^2+\|\varphi_{k0}(0)\|^2),\ \forall |k|>M,&&\end{aligned}\end{flalign}for some $C_9,\eta_9>0$, independent of $k$.
Therefore, we have to control the system \eqref{e102} for $|k|\leq M$ only.

Furthermore, Lemma \ref{cl1} guarantees that, for each  $k$, $-\mathbf{A}_{k0}$ has a countable set of eigenvalues, denoted by $\left\{\lambda_j^{k0}\right\}_{j=1}^\infty$. Moreover,  there is only a finite number $N_{k0}\in\mathbb{N}$ of eigenvalues with $\Re \lambda_j^{k0}\geq0,$  the unstable eigenvalues. Let us denote by $\left\{\left(Z_j^{k0}, \Phi_j^{k0}\right)\right\}_{j=1}^\infty$ and by $\left\{\left(Z_j^{k0*}, \Phi_j^{k0*}\right)\right\}_{j=1}^\infty$ the corresponding eigenvectors systems of $-\mathbf{A}_{k0}$ and its dual $-\mathbf{A}^*_{k0},$ respectively.

As before, we assume that
$$(H_{22})\text{ All the unstable eigenvalues $\lambda_{j}^{k0},\ |k|\leq M,\ j=1,2,..., N_{k0},$ are semisimple.}$$
Then, we have the counterpart of Lemma \ref{l2}
\begin{lemma}\label{cl2} Under assumption $(H_{22})$, there exists a $b\in\mathbb{C}$, such that for each unstable eigenvalue $\overline{\lambda_j^{k0}}$ ($|k|\leq M$ with $M$ from Lemma \ref{cl1}; and $j\in\left\{1,2,...,N_{k0}\right\}),$  one can choose the corresponding eigenvector $\left(Z_j^{k0*}, \Phi_j^{k0*}\right)$   in such a way that
\begin{flalign}\label{ce40}(Z_{j}^{k0*})''(0)+b(Z_{j}^{k0*})''(1)\neq0.\end{flalign}
\end{lemma}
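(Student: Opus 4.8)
The plan is to reproduce, essentially verbatim, the proof of Lemma \ref{l2}, specialising the couple $(k,l)$ to $(k,0)$, i.e.\ replacing $k^2+l^2$ by $k^2$ throughout. First I would write the dual operator $\mathbf A_{k0}^*$ from the explicit form \eqref{ce25}, obtaining the exact analogue of \eqref{e74},
\[
\mathbf A_{k0}^*\begin{pmatrix}z\\ \varphi\end{pmatrix}=\begin{pmatrix}L_{k0}^{-1}F_{k0}^* & L_{k0}^{-1}(\varphi'_\infty\cdot)\\ -\varepsilon\kappa k^2 L_{k0}(\varphi'_\infty\cdot)-\varepsilon\kappa k^2\varphi'''_\infty & E_{k0}^*\end{pmatrix}\begin{pmatrix}z\\ \varphi\end{pmatrix},
\]
so that the eigenrelation $\overline{\lambda_j^{k0}}(Z^*,\Phi^*)+\mathbf A_{k0}^*(Z^*,\Phi^*)=0$ becomes a fourth-order ODE system on $(0,1)$ of exactly the structure of \eqref{e42}. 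I would then argue by contradiction: assume that some unstable eigenvector satisfies $(Z^*)''(0)=(Z^*)''(1)=0$, adjoin these two conditions to those already contained in $\mathcal D(\mathbf A_{k0})$, and aim at forcing $(Z^*,\Phi^*)\equiv(0,0)$.

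The two structural ingredients that drive Lemma \ref{l2} are intact here: by \eqref{e10001} the Poiseuille profile $U$ is symmetric (and $U''\equiv-2C_U$ is symmetric as well), while by \eqref{e159} the function $\varphi_\infty$ is antisymmetric. Hence $f\mapsto\hat f$ preserves the solution set of the ODE system, $Z^*\equiv0\Leftrightarrow\Phi^*\equiv0$, and $Z^*\in\mathcal S\Leftrightarrow\Phi^*\in\mathcal S$, $Z^*\in\mathcal{AS}\Leftrightarrow\Phi^*\in\mathcal{AS}$, each by the same one-line scalar-multiplication argument using $\Re\lambda_j^{k0}\geq0$. Replacing, if necessary, $(Z^*,\Phi^*)$ by $(Z^*+\hat{Z^*},\Phi^*+\hat{\Phi^*})$, it therefore suffices to treat the case where $Z^*$, and hence $\Phi^*$, has a definite symmetry, say symmetric. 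From there I would run the four steps of Lemma \ref{l2}, denoting by $\mathcal L_{k0},\mathcal F_{k0},\mathcal E_{k0}$ the differential forms of the operators in \eqref{ce13}--\eqref{ce15}: Step I produces $Z$ with $\lambda\mathcal L_{k0}Z+\mathcal F_{k0}Z=0$, $Z(0)+Z(1)\neq0$ and $\langle Z,\mu_1\rangle=0$ with $\mu_1:=\varphi'_\infty\Phi^*$, using that $\{V\in H^4(0,1):\lambda\mathcal L_{k0}V+\mathcal F_{k0}V=0,\ V''(0)=V''(1)=0\}$ is three-dimensional and that its symmetric and antisymmetric subspaces are each at most one-dimensional; Steps II and III produce $\Phi$ and $\Psi$ solving $\mathcal E_{k0}V+\lambda V=0$ with the normalisations $\Phi'''(0)-\Phi'''(1)\neq0$, $\Psi'(0)-\Psi'(1)\neq0$ and the orthogonality $\langle\mu_2,\Phi\rangle=\langle\mu_2,\Psi\rangle=0$, where $\mu_2:=-\varepsilon\kappa k^2 L_{k0}(\varphi'_\infty Z^*)-\varepsilon\kappa k^2\varphi'''_\infty Z^*$.

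In Step IV I would scalarly multiply the first equation of the system by $Z$ and the second successively by $\Phi$ and $\Psi$, integrate by parts, and combine the symmetry of $(Z^*,\Phi^*)$ with the constraints on $Z,\Phi,\Psi$ to extract $(Z^*)'''(0)=\Phi^*(0)=(\Phi^*)''(0)=0$; together with the conditions already in force this yields a complete set of Cauchy data at $y=0$ for an analytic solution of a fourth-order system, so $(Z^*,\Phi^*)\equiv(0,0)$, contradicting that it is an eigenvector. Consequently every unstable eigenvector can be chosen so that $(Z^*)''(0)\neq0$ or $(Z^*)''(1)\neq0$, and since there are only finitely many (for $|k|\leq M$, $1\leq j\leq N_{k0}$), a single $b\in\mathbb C$ can be picked with $(Z_j^{k0*})''(0)+b(Z_j^{k0*})''(1)\neq0$ for all of them, which is \eqref{ce40}. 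The only genuinely delicate point, though still essentially routine, will be the boundary-term bookkeeping in the integration by parts of Step IV, where the vanishing of the unwanted boundary contributions hinges on matching the three null conditions built into $Z,\Phi,\Psi$ against the symmetry of $(Z^*,\Phi^*)$; everything else is a faithful transcription of the proof of Lemma \ref{l2} with $k^2+l^2$ replaced by $k^2$.
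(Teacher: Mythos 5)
Your proposal is correct and coincides with the paper's approach: the paper offers no separate argument for Lemma \ref{cl2}, asserting it as the direct counterpart of Lemma \ref{l2} for the operators $\mathbf{A}_{k0}$, which is precisely the specialisation $k^2+l^2\mapsto k^2$ you carry out, with the same symmetry/antisymmetry ingredients ($U$ symmetric, $\varphi_\infty$ antisymmetric), the same four-step construction of $Z,\Phi,\Psi$, and the same analyticity/Cauchy-data conclusion followed by the finite-choice of $b$.
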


 Finally, similarly as in \eqref{e60}, we define the stabilizing feedback laws, for $|k|\leq M,\ k\neq0$:
\begin{flalign}\label{ce60}\mathbf{s}_{k0}(t):=-\frac{i}{k}\Omega_{k0}(t) \text{ and }\mathbf{t}_{k0}:=\overline{b}\frac{i}{k}\Omega_{k0},\end{flalign}
where  $\Omega_{k0}=\Omega_{k0}(t,v,\varphi)$ has the form
\begin{flalign}\label{ce61}\begin{aligned}&\Omega_{k0}(t):=\\&
\left<\Lambda_{sum}^{k0}\mathbf{R}_{k0}\left(\begin{array}{c}\int_\mathcal{O}\left[-v_{yy}+k^2v\right]\overline{Z_1^{k0*}}+\varphi\overline{\Phi_1^{k0*}}]e^{-\text{i}kx}dxdydz\\
\int_\mathcal{O}\left[-v_{yy}+k^2v\right]\overline{Z_2^{k0*}}+\varphi\overline{\Phi_2^{k0*}}]e^{-\text{i}kx}dxdydz\\
\ddots\\
\int_\mathcal{O}\left[-v_{yy}+k^2v\right]\overline{Z_{N_{k0}}^{k0*}}+\varphi\overline{\Phi_{N_{k0}}^{k0*}}]e^{-\text{i}kx}dxdydz\end{array}\right),\left(\begin{array}{c}(Z_{1}^{k0*})''(0)+b(Z_1^{k0*})''(1)\\ (Z_{2}^{k0*})''(0)+b(Z_2^{k0*})''(1)\\ \ddots \\ (Z_{N_{k0}}^{k0*})''(0)+b(Z_{N_{k0}}^{k0*})''(1)\end{array}\right)\right>_{N_{k0}},
\end{aligned}&&\end{flalign}with $\Lambda_{sum}^{k0}:=\overline{\Lambda_{\gamma_1^{k0}}^{k0}+...+\Lambda_{\gamma^{k0}_{N_{k0}}}^{k0}},$ for $\Lambda_{\gamma_i^{k0}}^{k0}$ the following $N_{k0}$ diagonal matrices
\begin{flalign}\label{ce63}\Lambda_{\gamma_i^{k0}}^{k0}:=diag\left(\frac{1}{\gamma_i^{k0}+\lambda_1^{k0}},\frac{1}{\gamma_i^{k0}+\lambda_2^{k0}}, ... ,\frac{1}{\gamma_i^{k0}+\lambda_{N_{k0}}^{k0}}  \right),\ i=1,2,...,N_{k0},\end{flalign}for some sufficiently large $0<\gamma_1^{k0}<...<\gamma_{N_{k0}}^{k0}$. Moreover,
 \begin{flalign}\label{ce66}\mathbf{R}_{k0}:=(R_1^{k0}+R_2^{k0}+...+R_{N_{k0}}^{k0})^{-1},\end{flalign}where
\begin{flalign}\label{ce67}R_i^{k0}:=\overline{\Lambda_{\gamma_i^{k0}}^{k0}}R_{k0}\Lambda_{\gamma_i^{k0}}^{k0},\ i=1,2,...,N_{k0}.\end{flalign} Here, $R_{k0}$ stands for the square matrix of order $N_{k0}$, of the form
\begin{equation}\label{ce200}R_{k0}:=\left(\begin{array}{cccc}l_1^{k0}\overline{l_1^{k0}}& l_1^{k0}\overline{l_2^{k0}}& ...& l_1^{k0}\overline{l_{N_{k0}}^{k0}}\\
l_2^{k0}\overline{l_1^{k0}}& l_2^{k0}\overline{l_2^{k0}}& ...& l_2^{k0}\overline{l_{N_{k0}}^{k0}}\\
\ddots& \ddots &\ddots&\ddots\\
l_{N_{k0}}^{k0}\overline{l_1^{k0}}& l_{N_{k0}}^{k0}\overline{l_2^{k0}}& ...& l_{N_{k0}}^{k0}\overline{l_{N_{k0}}^{k0}}\end{array}\right),\end{equation}where
$$l_j^{k0}:=(Z_{j}^{k0*})''(0)+b(Z_j^{k0*})''(1),\ j=1,2,...,N_{k0}.$$ 

Similarly as in the first case, $k,l\neq0$, one may show that once we plug controllers $\mathbf{s}_{k0}=\mathbf{t}_{k0}=0$ for $|k|>M$ and $\mathbf{s}_{k0},\mathbf{t}_{k0}$ given by \eqref{ce60}, for $|k|\leq M$, into system \eqref{ce12}, the corresponding solution of the closed-loop system satisfies for some $C_{10},\eta_{10}>0,$ independent of $k$ and $l$,
 \begin{flalign}\label{ce101}\|(u_{k0}(t),v_{k0}(t),w_{k0}(t),\varphi_{k0}(t))\|^2\leq C_{10}e^{-\eta_{10} t}\|(u_{k0}(0),v_{k0}(0),w_{k0}(0),\varphi_{k0}(0))\|^2,\ \forall t\geq0.\end{flalign}
 
Gathering together relations \eqref{e59},\eqref{e110},\eqref{e94},\eqref{e101},\eqref{e103},\eqref{ce101}, we see that the feedback laws defined by \eqref{e60} and \eqref{ce60}, respectively, ensure the stability of the linearized system, thereby completing the proof. $\hfill\Box$

\section*{Acknowledgement} This  was supported by a grant of the Romanian Ministry of Research
and Innovation, CNCS--UEFISCDI, project number
PN-III-P1-1.1-TE-2019-0397, within PNCDI III.


\begin{thebibliography}{99}
	\bibitem{8}V. Barbu, “Stabilization of the Navier-Stokes Flows,” Springer, New York, 2010.
	\bibitem{9} V. Barbu, I. Lasiecka and R. Triggiani, The unique continuation property of eigenfunctions to
	Stokes-Oseen operator is generic with respect to the coefficients, Nonlin. Anal. Ser. A: Theory
	Meth. and Appl. 75 (12) (2012), 4384-4397.
	\bibitem{bsb} V. Barbu, Boundary Stabilization of Equilibrium Solutions to Parabolic Equations, IEEE Transaction on Automatic Control 58 (9) (2013), 2416-2420.
	
\bibitem{barbutg} V. Barbu, I. Lasiecka, R. Triggiani, Abstract settings for tangential boundary stabilization of Navier-Stokes equations by high-and low-gain feedback controllers. Nonlinear Analysis 64 (2006), 2704-2746.
\bibitem{barbucolli}V. Barbu, P. Colli, G. Gilardi, G. Marinoschi, Feedback stabilization of the Cahn-Hilliard type system for phase separation, J. Differential Equations 262(3) (2017), 2286 - 2334
\bibitem{a7} T.R. Bewley: Flow control: New challenges for new renaissance. Progress in Aerospace. Science 37,
21–58 (2001)
\bibitem{gal3}S. Bosia, M. Grasselli, A. Miranville, On long time behavior of a 2D hydrodynamic model for chemically reacting binary fluid mixtures. Mathematical Methods in the Applied Sciences 37 (5) (2014), 726-743.
\bibitem{gal4}S. Bosia, S. Gatti, Pullback exponential attractors for a Cahn-Hilliard-Navier-Stokes system in 2D,  Dynamics of Partial Differential Equations 11 (1)(2014), 1-38.
\bibitem{boyer1} F. Boyer, Mathematical study of multi-phase flow under shear through order parameter formulation. Asymptotic Analysis 20 (1999) 175-212.
\bibitem{chella}R. Chella and J. Vinals, Mixing of a two-phase fluid by a cavity flow, Phys. Rev. E 53 (1996), 3832.
\bibitem{colli}P. Colli, G. Gilardi, I. Munteanu, Stabilisation of a linearised Cahn-Hilliard system for phase separation by proportional boundary feedbacks, Int. J. Control, doi.org/10.1080/00207179.2019.1597280
\bibitem{doi}M. Doi, Dynamics of domains and textures, in: Theoretical Challenges in the Dynamics of Complex Fluids, 1997, pp. 293-
314.
\bibitem{gal1}C. Gal, M. Grasselli, Instability of two-phase flows: A lower bound on the dimension of the global attractor of the Cahn-Hilliard-Navier-Stokes system. Physica D, 24 (2011), 629-635.
\bibitem{gal}C. Gal, M. Grasselli, Asymptotic behavior of a Cahn-Hilliard-Navier-Stokes system in 2D, Ann. Henri Poincare, Nonlin. Anal. 27 (1) (2010), 401-436.
\bibitem{gal2} C. Gal, M. Grasselli, Trajectory attractors for binary fluid mixtrues in 3D, Chinese Annals of Mathematics, Series B, 31, (2010), 655-678.
\bibitem{t6}M.E. Gurtin, D.Polignone, J. Viñals, Two-phase binary fluids and immiscible fluids described
by an order parameter. Math. Models Methods Appl. Sci. 6(6), 815–831 (1996). ISSN 0218–
2025
\bibitem{t7}P.C. Hohenberg, B.I. Halperin, Theory of dynamic critical phenomena. Rev. Mod. Phys. 49,
435–479 (1977)
\bibitem{mar} J. Lowengrub, E. Titi, K. Zhao, Analysis of a mixture model of tumor growth, Euro. Jnl. Applied Mathematics (2013), 1-44.

\bibitem{ion2} I. Munteanu, Stabilisation of parabolic semilinear equations, \textit{Int. J. Control}, \textbf{90(5)} (2017), 1063-1076.



\bibitem{book} I. Munteanu, Boundary stabilization of parabolic equations, Birkhauser-Springer, 2019.
\bibitem{raymond} J.P. Raymond, Feedback Boundary Stabilization of the Two-Dimensional Navier--Stokes Equations, SICON 45 (3) (2006), 790-828.
\bibitem{G4}Smith, B.L., Glezer, A.: The formulation and evolution of synthetic jets. Phys. Fluids 10, 2281–2297
(1998)
\bibitem{krstic} R. Vazquez, M. Krstic, Control of Turbulent and Magnetohydrodynamic Channel Flows, Birkhäuser Boston, 2008.
\end{thebibliography}
\end{document}